\newtheorem{theorem}{Theorem}[section]
\newtheorem{corollary}[theorem]{Corollary}
\newtheorem{lemma}[theorem]{Lemma}
\newtheorem{prop}[theorem]{Proposition}
\newtheorem{example}[theorem]{Example}
\newtheorem{remark}[theorem]{Remark}
\newtheorem{definition}[theorem]{Definition}
\begin{document}

\title[Density of roots of the Yamada polynomial of spatial graphs]{Density of roots of the Yamada polynomial \\ of spatial graphs}

\author{Miaowang LI}
\address{School of Mathematical Sciences, Dalian University of Technology, Dalian 116024, P. R. China}
\email {limiaowang@yeah.net}

\author{Fengchun LEI}
\address{School of Mathematical Sciences, Dalian University of Technology, Dalian 116024, P. R. China}
\email{fclei@dlut.edu.cn}

\author{Fengling LI}
\address{School of Mathematical Sciences, Dalian University of Technology, Dalian 116024, P. R. China}
\email{dutlfl@163.com}

\author{Andrei VESNIN}
\address{Regional Scientific and Educational Mathematical Center, Department of Mathematics and Mechanics, Tomsk State University, Tomsk, 634050, Russia}
 \email{vesnin@math.nsc.ru}

\thanks{
The second author is supported in part by a grant (No.11431009) of NSFC; the third author is supported in part by grants (No.11671064 and No.11471151) of NSFC; the fourth author is supported by the Ministry of Education and Science of Russia (state assignment No.1.12877.2018/12.1).}

\subjclass[2010]{Primary 57M15; Secondary 05C31}

\keywords{Yamada polynomial, spatial graph, chain polynomial}

\begin{abstract}
We survey the construction and properties of the Yamada polynomial of spatial graphs and present the Yamada polynomial formulae for some classes of graphs. Then we construct an infinite family of spatial graphs for which roots of Yamada polynomials are dense in the complex plane.
\end{abstract}

\maketitle

\section{Introduction}

It is know that (Laurent) polynomials play an important role in the knot theory (see, for example, \cite{Kaw}). We just mention on Alexander polynomial and Jones polynomial which are seem known to any topologist. The spatial graph theory arises as a natural generalization of the knot theory. Recall that the study on intrinsic knotting and linking of graphs in $S^3$ was initiated the 1980s by the nice results of J.H.~Conway and C.~Gordon~\cite{CC}. They proved that any embedding of the complete graph $\mathbf K_7$ in $\mathbb R^3$ contains a knotted cycle and any embedding of the complete graph $\mathbf K_{6}$ in $\mathbb R^{3}$ contains a pair of linked cycles. It is natural that the modern theory of spatial graphs combines  topological and graph-theoretical methods. The powerful of polynomial invariants of knots as well as polynomial invariants of graphs was a natural motivation for investigation of polynomial invariants of spatial graphs initiated by L.H.~ Kauffman~\cite{k}. Being motivated by problems on knotting and linking of DNA and chemical compounds, the study of spatial graphs is in the center of interest for last decades.

It is well known due S.~Kinoshita~\cite{K1,K2}, that the Alexander ideal and Alexander polynomial are invariants of spatial graphs which are determined by the fundamental groups of the complements of spatial graphs.

In 1989, S.~Yamada~\cite{YA} introduced Yamada polynomial of spatial graphs in $\mathbb R^3$. It is an concise and useful ambient isotopy invariant for graphs with maximal degree less than four. There are many interesting results on Yamada polynomial and its generalizations. J. Murakami~\cite{M} investigated the two-variable extension $\mathbf{Z}_S$ of the Yamada polynomial and gave an invariant related to the HOMFLY polynomial. In 1994, the crossing number of spatial graphs in terms of the reduced degree of Yamada polynomial has been studied by T.~Motohashi, Y.~Ohyama and K.~Taniyama~\cite{MO}. In 1996, A.~Dobrynin and A.~Vesnin~\cite{DV} studied properties of the Yamada polynomial of spatial graphs. For any graph $G$, V.~Vershinin and A.~Vesnin~\cite{VV} defined bigraded cohomology groups whose graded Euler characteristic is a multiple of the Yamada polynomial of~$G$. Another invariant of spatial graphs associated with $U_{q}(sl(2, \mathbb C))$ was introduced by S.~Yoshinaga~\cite{YO}. See~\cite{Kobe} about the relation between Yamada polynomial and Yoshinaga polynomial.
Nice results on the structure of the Yamada and  flows polynomials of cubic graphs are established by I.~Agol and V.~Krushkal~\cite{AK}.

A polynomial invariant of  virtual graphs was constructed by Y.~Miyazawa~\cite{MY} as an extension of the Yamada polynomial in 2006, see also the paper~\cite{Fleming} by T.~ Fleming and B.~Mellor. The generalized Yamada polynomials of virtual spatial graphs were recently introduced by Q.~Deng, X.~Jin and L.H.~Kauffnan in~\cite{DJK}.

In this paper we will discuss zeros of Yamada polynomials. Recall that zeros of polynomial invariants of knots and graphs is a question of special interest studied by A.D.~Sokal~\cite{SO} and  P.~Csikv\'ari, P.E.~Frenkel, J.~Hladk\'y, T.~Hubai~\cite{CFHH}  for chromatic polynomial; by O.T.~Dashbach, T.D.~Le, X.-S.~Lin~\cite{DLL} and  X.~Jun, F.~Zhang, F.~Dong, E.G.~Tay~\cite{XF} for Jones polynomial.

This paper consists of the introduction and three parts. In the first part (Section~2) we recall properties of the Yamada polynomial of graphs and recall some formulae from~\cite{paper1} for computing the Yamada polynomial of graphs by edge replacements via the chain polynomial (see Theorem~\ref{tger}). In the second part (Section~3) we discuss formulae for computing the Yamada polynomial of spatial graphs obtained by replacing edges of  cycle graphs, theta-graphs, or bouquet graphs by spatial parts  (see Theorem~\ref{tcn} and~\cite{paper1}). In the last part (Section~4), we prove that zeros of the Yamada polynomial of spatial graphs are dense in the complex plane (see Theorem~\ref{tm4}).

\section{Yamada polynomial of a graph}

We consider a graph $G$, admitting loops and multiple edges. Let us use standard notations $p(G)$ and $q(G)$ for number of vertices and number of edges of it.

Before defining the Yamada polynomial of a graph, we recall a graph invariant which is a special case of the Negami polynomial invariant~\cite{N}.

\smallskip

\begin{definition}~\cite{YA}
Define 2-variable Laurent polynomial $h(G) = h(G) (x,y)$ of graph $G=(V,E)$, where $V = V(G)$ is the vertex set and $E = E(G)$ is the edge set of $G$, by the rule
$$
h(G)(x,y) = \sum_{F\subset E}(-x)^{-|F|} f(G-F),
$$
with $f(G)=x^{\mu(G)}y^{\beta(G)}$, where $\mu(G)$ and $\beta(G)$ is the number of connected components of $G$ and the first Betti number of $G$.
\end{definition}

\smallskip

\begin{definition}~\cite{YA}
{\it The Yamada polynomial} of a graph $G$ is a Laurent polynomial in one variable, obtained by the following substitution into $h(G)(x,y)$:
$$
H(G)(A) = h(G)(-1,-A-2-A^{-1}).
$$
\end{definition}

\smallskip

The following properties of $H(x,y)$ hold (see~\cite{YA} for details):
\begin{itemize}
\item[(1)] $H(\cdot)= -1$.
\item[(2)] Let $e$ be a non-loop edge of a graph $G$. Then $H(G)=H(G/e)+H(G-e)$, where $G/e$ is the graph obtained by contracting the edge $e$, and $G-e$ is the graph obtained by deleting of the edge $e$.
\item[(3)] Let $e$ be a loop edge of a graph $G$. Then $H(G)=-\sigma H(G-e)$, where $\sigma= A+1+A^{-1}$.
\item[(4)] Let $G_{1}\cup G_{2}$ be a disjoint union of graphs $G_{1}$and $G_{2}$, then $H(G_{1}\cup G_{2})= H(G_{1})H(G_{2})$.
\item[(5)] Let $G_{1}\cdot G_{2}$ be a union of graphs $G_{1}$ and $G_{2}$ having one common point, then
 \[H(G_{1}\cdot G_{2})= -H(G_{1})H(G_{2}).\]
\item[(6)] If $G$ has an isthmus, then  $H(G)= 0$.
\end{itemize}

It is easy to find directly (see also~\cite{DV}) polynomial $H(G)$ for some simple classes of graphs.

\smallskip

\begin{lemma}  \label{lemma-dv} The following properties holds with $\sigma = A + 1 + A^{-1}$.
\begin{itemize}
\item[(i)] Let $T_q$ be a tree with $q$ edges. Then $H(T_q)=0$ for all $q$.
\item[(ii)] Let $C_{n}$ be the cycle of length $n$. Then $H(C_{n})= \sigma$ for all $n$.
\item[(iii)] Let $B_q$ be the one-vertex graph with $q$ loops, also known as ``$q$-bouquet''. Then $H(B_q) \, = \,(-1)^{q-1} \sigma^q$.
\item[(iv)] Let $\Theta_s$ be the graph consisting of two vertices and $s$ edges between them, also known as ``$s$-theta-graph'' (see Figure~\ref{fig1}). Then
$$
 H(\Theta_s) \, = \, \frac{1}{\sigma+1} [\sigma +(-\sigma)^s].
$$
\end{itemize}
\end{lemma}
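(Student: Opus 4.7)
The plan is to prove all four parts by short inductions that exploit the deletion--contraction relations (2), (3) and the isthmus-killing property (6). Part (i) is immediate: for any $q \ge 1$ every edge of a tree $T_q$ is an isthmus, so (6) gives $H(T_q) = 0$. (The degenerate case $q = 0$ is a single vertex, which by (1) has $H = -1$, so the statement should be read for $q \ge 1$.)

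For part (ii) I would induct on $n$. The base case $n = 1$ is a single loop, and (3) together with (1) gives $H(C_1) = -\sigma H(\cdot) = \sigma$. For $n \ge 2$, take any edge $e$ of $C_n$: it is not a loop, $C_n/e$ is $C_{n-1}$, and $C_n - e$ is a path with $n-1$ edges, which is a tree. By (2) and (i), $H(C_n) = H(C_{n-1}) + 0 = \sigma$. Part (iii) is then a one-line induction on $q$: the base case $B_1 = C_1$ was just handled, and for $q \ge 2$ property (3) applied to any loop gives $H(B_q) = -\sigma H(B_{q-1}) = (-1)^{q-1}\sigma^q$.

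For part (iv) I would again induct on $s$. The case $s = 1$ is a single edge (a tree), so both sides vanish. For $s \ge 2$, pick one of the $s$ parallel edges $e$; contracting $e$ identifies the two vertices and turns the remaining $s-1$ edges into loops, so $\Theta_s/e = B_{s-1}$, while $\Theta_s - e = \Theta_{s-1}$. Applying (2) and substituting (iii) and the inductive hypothesis gives
\[
H(\Theta_s) \;=\; (-1)^{s-2}\sigma^{s-1} + \frac{\sigma + (-\sigma)^{s-1}}{\sigma+1}.
\]
The only real (and quite routine) obstacle is the algebraic simplification at the end: combining these terms over the common denominator $\sigma+1$ and observing the cancellation $(-1)^s\sigma^{s-1} + (-1)^{s-1}\sigma^{s-1} = 0$ collapses the numerator to $\sigma + (-\sigma)^s$, closing the induction.
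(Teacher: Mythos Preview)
Your argument is correct. The paper does not actually give a proof of this lemma: it simply asserts that ``it is easy to find directly (see also~\cite{DV})'' and states the result. Your deletion--contraction inductions using properties (1)--(3) and (6) are exactly the natural route and are carried out cleanly; in particular, the identification $\Theta_s/e = B_{s-1}$ and the cancellation $(-1)^{s-2}\sigma^{s-1} + (-1)^{s-1}\sigma^{s-1} = 0$ in part (iv) are handled correctly. Your side remark that (i) should be read for $q \ge 1$ (since a single vertex has $H = -1$) is also accurate.
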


\begin{figure}[!ht]
\centering
\unitlength=0.6mm
\begin{picture}(0,35)(0,0)
\thicklines
\put(-20,15){\circle*{3}}
\put(20,15){\circle*{3}}
\qbezier(-20,15)(0,40)(20,15)
\qbezier(-20,15)(0,25)(20,15)
\put(0,33){\makebox(0,0)[cc]{\small $1$}}
\put(0,23){\makebox(0,0)[cc]{\small $2$}}
\put(0,5){\makebox(0,0)[cc]{\small $s$}}
\put(0,15){\makebox(0,0)[cc]{$\vdots$}}
\qbezier(-20,15)(0,-10)(20,15)
\end{picture}
\caption{The graph $\Theta_{s}$.} \label{fig1}
\end{figure}
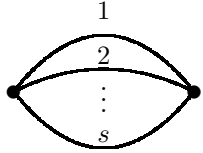

\smallskip

The following property of $H(G)$ was obtained in \cite{paper1}.

\smallskip

\begin{prop}\label{ptv} \cite{paper1}
Let $G_{1}:G_{2}$ be the union of two graphs $G_{1}$ and $G_{2}$ having only two common vertices $u$ and $v$. Let $K_{1}$ and $K_{2}$ be graphs obtained from $G_{1}$ and $G_{2}$, respectively, by identifying $u$ and $v$. Then
\begin{equation*}\label{twovert}
H(G_{1}:G_{2})  =  \frac{1}{\sigma} \Big[ H(K_{1}) H(K_{2}) + (\sigma+1) H(G_{1}) H(G_{2})   + H(K_{1}) H(G_{2}) + H(K_{2}) H(G_{1}) \Big].
\end{equation*}
\end{prop}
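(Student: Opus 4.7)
The plan is to work directly with the underlying two-variable Negami-type polynomial $h(G)(x,y) = \sum_{F\subset E(G)} (-x)^{-|F|} f(G-F)$ and specialize to the Yamada polynomial only at the end. Since $E(G_1:G_2) = E(G_1)\sqcup E(G_2)$, every edge subset factors uniquely as $F = F_1\sqcup F_2$ with $F_i\subset E(G_i)$, and $(G_1:G_2)-F = (G_1-F_1):(G_2-F_2)$. The central observation is that both $f((G_1-F_1):(G_2-F_2))$ and $f(K_i-F_i)$ depend in a controlled way on one binary parameter: the indicator $\epsilon_i\in\{0,1\}$ of whether $u$ and $v$ lie in the same connected component of $G_i-F_i$.

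The first step is to compute these correction factors using the Euler-characteristic identity $\mu(H)-\beta(H)=p(H)-q(H)$. When $\epsilon_1=\epsilon_2=1$, gluing along two vertices that are already connected on each side merges one pair of components and closes one new independent cycle, so $f((G_1-F_1):(G_2-F_2)) = x^{-1}y\cdot f(G_1-F_1)\,f(G_2-F_2)$. In the remaining three sub-cases the gluing only merges components and creates no new cycle, yielding the factor $x^{-2}$. The analogous bookkeeping for $K_i-F_i$ (which is $G_i-F_i$ with $u,v$ identified) gives $f(K_i-F_i)=y\cdot f(G_i-F_i)$ when $\epsilon_i=1$ and $f(K_i-F_i)=x^{-1}\cdot f(G_i-F_i)$ when $\epsilon_i=0$.

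Next I would split $h(G_i)=h_0(G_i)+h_1(G_i)$ according to $\epsilon_i$ and, summing the case analysis above over $(F_1,F_2)$, collect the identities
\[
 h(G_1:G_2) = (x^{-1}y - x^{-2})\,h_1(G_1)\,h_1(G_2) + x^{-2}\,h(G_1)\,h(G_2),
\]
\[
 h(K_i) = (y - x^{-1})\,h_1(G_i) + x^{-1}\,h(G_i).
\]
The second relation solves cleanly for $h_1(G_i) = (h(K_i)-x^{-1}h(G_i))/(y-x^{-1})$; substituting into the first eliminates the auxiliary $h_1$'s and expresses $h(G_1:G_2)$ entirely in terms of $h(G_i)$ and $h(K_i)$.

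The final step is the specialization $x=-1$, $y=-A-2-A^{-1}=-\sigma-1$, which makes $y-x^{-1}=-\sigma$ and $x^{-1}y-x^{-2}=\sigma$, so the substituted formula collapses to
\[
 H(G_1:G_2) = \tfrac{1}{\sigma}\bigl(H(K_1)+H(G_1)\bigr)\bigl(H(K_2)+H(G_2)\bigr) + H(G_1)\,H(G_2),
\]
and expanding and combining the $H(G_1)H(G_2)$ terms (using $1 + 1/\sigma = (\sigma+1)/\sigma$) produces the stated identity. The main obstacle is the careful tracking of $\mu$ and $\beta$ across the four $(\epsilon_1,\epsilon_2)$ sub-cases in the first step; once those factors are pinned down, the rest is linear algebra and a substitution.
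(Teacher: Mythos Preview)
Your argument is correct. The case analysis on $(\epsilon_1,\epsilon_2)$ is accurate: gluing two components at two already-connected vertex pairs drops $\mu$ by $1$ and raises $\beta$ by $1$, while in the other three cases $\mu$ drops by $2$ and $\beta$ is unchanged; similarly the $K_i$ bookkeeping is right. The algebra after substitution checks out, and the factor $x^{-1}y - x^{-2} = x^{-1}(y-x^{-1})$ makes the cancellation with $(y-x^{-1})^2$ clean, leading exactly to the displayed formula after specialization.

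As for comparison: the present paper does not actually prove this proposition --- it is quoted as a result from \cite{paper1} without argument --- so there is no proof here to compare your approach against. Your method, working at the level of the two-variable polynomial $h(G)(x,y)$ and specializing only at the end, is in fact slightly more general than what is needed (it yields a formula for $h(G_1:G_2)$ valid before specialization), which is a mild bonus.
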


\smallskip

Before further discussions we recall some properties of the chain polynomial introduced by R.C.~Read and E.G.~Whitehead~\cite{RE}, see also~\cite{XF}.  The chain polynomial is defined for edge-labelled graphs where labels are elements of a commutative ring with unity. We will denote the edges by the labels associated with them.

\smallskip

\begin{definition}
{\it The chain polynomial} $\operatorname{Ch}(G)$ of a labelled graph $G$ is defined as
$$
\operatorname{Ch}(G) = \sum_{Y \subset E} F_{G-Y} (1-w) \prod_{a \in Y} a,
$$
where the sum is taken over all subsets of the edge set $E$ of $G$, $F_{G-Y} (1-w)$ denotes the flow polynomial of the subgraph $G - Y$ calculated at $1-w$, and $\prod_{a\in Y}$ denotes the product of edge-labels of $Y$.
\end{definition}

\smallskip

The chain polynomial can be also defined in the following recursive form.

\smallskip

\begin{definition} \label{def3.2}
{\it The chain polynomial}  $\operatorname{Ch} (G) (w)$ in a variable $w$ of a labelled graph $G$ is defined by following rules.
\begin{itemize}
\item[(1)] If $G$ is edgeless, then $\operatorname{Ch}(G)=1$.
\item[(2)] Otherwise, suppose $a$ is an edge of $G$ labelled by $a$. Then
\begin{itemize}
\item[(2a)] If $a$ is a loop, then $\operatorname{Ch}(G) = (a-w) \operatorname{Ch}(G-a)$.
\item[(2b)] If $a$ is not a loop, then $\operatorname{Ch}(G) = (a-1) \operatorname{Ch} (G-a) + \operatorname{Ch}(G/a)$.
\end{itemize}
\end{itemize}
\end{definition}

\smallskip

For a reader convenience we demonstrate chain polynomials for simplest classes of graphs.

\smallskip

\begin{example}\label{Ecn}
Let $C_{n}$ be the $n$-cycle with edges labelled by $a_{1}, a_{2}, \cdots, a_{n}$, then
$$
\operatorname{Ch}(C_{n}) = \prod_{i=1}^n a_i \, - \, w.
$$
Let $\Theta_{s}$ be the $s$-theta-graph with edges labelled by $a_{1}, a_{2}, \cdots, a_{s}$, then
$$
\operatorname{Ch} (\Theta_{s}) = \frac{1}{1-w} \Big[ \prod^{s}_{i=1}(a_{i}-w) - w \prod ^s_{i=1}(a_{i}-1) \Big].
$$
Let $B_{q}$ be the $q$-bouquet with $q$ loops labelled by  $a_{1}, a_{2}, \ldots,  a_{q}$, then
$$
\operatorname{Ch} ( B_{q}) = \prod^{q}_{i=1}(a_{i}-w).
$$
\end{example}

\smallskip

To explore the relation between the chain polynomial and the Yamada polynomial, inspired by \cite{XF}, let us introduce the following notation. Let $G$ be a connected labelled graph and $K_{E}$ be a family of connected graphs with two attached vertices. Denote by $G(K_{E})$ the graph obtained from $G$ by replacing each edge $a= uv$ of $G$ by a connected graph $K_{a}\in K_{E}$ with two attached vertices $u$ and $v$ that has only the vertices $u$ and $v$ in common with $(G-a)(K_{E})$.

Let $K'_{a}$ be the graph obtained from $K_{a}$ by identifying $u$ and $v$, the two attached
vertices of $K_{a}$. Denote
$$
\alpha_{a }= \alpha(K_{a}) := \frac{1}{\sigma}[(\sigma+1)H(K_{a})+H(K'_{a})],
$$
$$
\beta_{a} = \beta(K_{a}) := \frac{1}{\sigma}[H(K_{a})+H(K'_{a})],
$$
and
$$
\gamma_{a} = \gamma(K_{a}) : =1-\frac{\alpha (K_{a})}{\beta (K_{a})}.
$$
It is easy to see that
$$
H(K'_{a})=(\sigma+1)\beta_{a}-\alpha_{a} \qquad \text{and} \qquad  H(K_{a})=\alpha_{a}-\beta_{a}.
$$

The following result from \cite{paper1} gives the relation between Yamada polynomial and chain polynomial.

\smallskip

\begin{theorem}\label{tger} \cite{paper1}
Let $G$ be a connected labelled graph, and $G(K_{E})$ be the graph obtained from $G$ by replacing the edge $a$ by a connected graph $K_{a}\in K_{E}$ for every edge $a$ of $G$. If we replace $w$ by $-\sigma$, and replace $a$ by $\gamma_{a}$ for every label $a$ in $\operatorname{Ch}(G)$, then we get
\begin{equation*}
H(G(K_{E}))=\frac{\prod_{a\in E(G)}\beta_{a}}{(-1)^{q(G)-p(G)}} \operatorname{Ch}(G),
\end{equation*}
where $p(G)$ and $q(G)$ are the numbers of vertices and edges of $G$, respectively.
\end{theorem}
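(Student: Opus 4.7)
My plan is to induct on the number of edges $q(G)$, matching the recursive definition of the chain polynomial (Definition \ref{def3.2}) against the deletion/contraction rules (properties (2)--(5)) and the two-vertex splitting formula (Proposition \ref{ptv}) for the Yamada polynomial. The two displayed identities $H(K_a) = \alpha_a - \beta_a$ and $H(K'_a) = (\sigma+1)\beta_a - \alpha_a$ (stated immediately before the theorem) will serve as the algebraic bridge between the two sides.

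The base case $q(G) = 0$ is immediate: since $G$ is connected it is a single vertex, so $G(K_E) = G$, giving $H(G(K_E)) = -1 = \frac{1}{(-1)^{0-1}} \cdot 1$, as claimed. For the inductive step I fix an edge $a$ and split into cases. If $a$ is a loop, then $G(K_E)$ is the one-point union of $(G-a)(K_E)$ with $K'_a$ at the vertex carrying $a$, so property (5) gives $H(G(K_E)) = -H((G-a)(K_E))\cdot H(K'_a)$. The chain recursion $\operatorname{Ch}(G) = (a-w)\operatorname{Ch}(G-a)$ becomes $(\gamma_a + \sigma)\,\operatorname{Ch}(G-a)$ under the substitution $a \mapsto \gamma_a$, $w \mapsto -\sigma$, and the verification reduces to the identity $\beta_a(\gamma_a + \sigma) = H(K'_a)$, which follows directly from $\gamma_a = 1 - \alpha_a/\beta_a$ and the displayed expressions for $H(K_a)$ and $H(K'_a)$. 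The sign from property (5) is absorbed by the shift $q(G)-1 \mapsto q(G)$ in the exponent $(-1)^{q(G)-p(G)}$.

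If $a=uv$ is not a loop, I apply Proposition \ref{ptv} with $G_1 = (G-a)(K_E)$ and $G_2 = K_a$, noting that identifying $u$ and $v$ in $G_1$ produces $(G/a)(K_E)$ while identifying them in $K_a$ produces $K'_a$. This yields four terms; the inductive hypothesis rewrites each in terms of the substituted $\operatorname{Ch}(G-a)$ or $\operatorname{Ch}(G/a)$. Collecting coefficients and using
\[
H(K_a)+H(K'_a)=\sigma\beta_a,\qquad (\sigma+1)H(K_a)+H(K'_a)=\sigma\alpha_a,
\]
the four terms collapse to
\[
\frac{\prod_{b\in E(G)} \beta_b}{(-1)^{q(G)-p(G)}}\Bigl[\operatorname{Ch}(G/a) + (\gamma_a - 1)\operatorname{Ch}(G-a)\Bigr],
\]
which, since $\gamma_a - 1 = -\alpha_a/\beta_a$ and $a \mapsto \gamma_a$, is exactly the substituted form of $\operatorname{Ch}(G) = (a-1)\operatorname{Ch}(G-a) + \operatorname{Ch}(G/a)$.

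The step I expect to require the most care is the bridge case: when $a$ is a cut edge, $G-a$ is disconnected, and the inductive hypothesis as phrased (for connected $G$) does not directly apply. I would address this by first extending the statement to (possibly disconnected) labelled graphs, exploiting the multiplicativity of $H$ under disjoint union (property (4)) and the analogous behaviour of $\operatorname{Ch}$, and then verifying that the sign exponent $q(G)-p(G)$ transforms correctly across the extension. Beyond this bookkeeping, the remainder is routine algebraic manipulation driven by the three identities above.
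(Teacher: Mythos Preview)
The paper does not prove this theorem; it is quoted from \cite{paper1} without argument, so there is no in-paper proof to compare against. Your inductive strategy is the natural one and is essentially correct. The loop case checks out exactly as you say, and in the non-loop case your grouping of the four terms from Proposition~\ref{ptv} via $H(K_a)+H(K'_a)=\sigma\beta_a$ and $(\sigma+1)H(K_a)+H(K'_a)=\sigma\alpha_a$ gives precisely
\[
H(G(K_E)) \;=\; \beta_a\,H\bigl((G/a)(K_E)\bigr) \;+\; \alpha_a\,H\bigl((G-a)(K_E)\bigr),
\]
and the sign bookkeeping (using $p(G/a)=p(G)-1$, $p(G-a)=p(G)$) then matches the substituted recursion $(\gamma_a-1)\operatorname{Ch}(G-a)+\operatorname{Ch}(G/a)$ on the nose.

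Your worry about the bridge case is legitimate but easily dispatched. Proposition~\ref{ptv} does not require $G_1$ to be connected, so the decomposition $G(K_E)=(G-a)(K_E):K_a$ is still valid when $a$ is a cut edge. The clean fix is exactly what you propose: strengthen the induction hypothesis to all (not necessarily connected) labelled graphs. Both $H$ and $\operatorname{Ch}$ are multiplicative over disjoint unions, the empty product of $\beta_b$'s is $1$, and the exponent $q-p$ is additive over components, so the disconnected base case ($c$ isolated vertices, $H=(-1)^c$, $\operatorname{Ch}=1$) and the inductive step both go through without change. No further ideas are needed.
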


\smallskip

\begin{example}
Consider a cyclic graph $G = C_{n}$ with all edges labelled by $a$ and replace each edge of it by the graph $K_{a} = \Theta_{s}$. These graphs as well as the resulting graph $C_{n}(\Theta_{s})$ are presented in Figure~\ref{fig2} for $n=4$ and $s=3$.
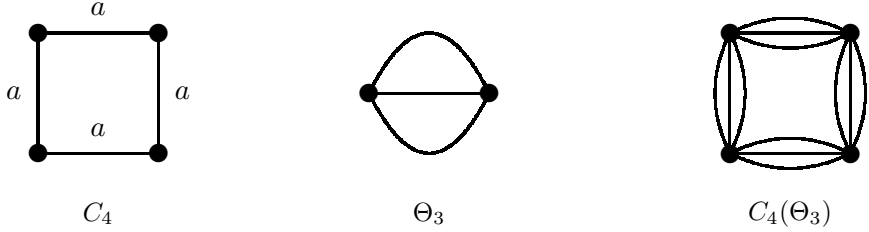
\begin{figure}[!ht]
\centering
\unitlength=0.8mm
\begin{picture}(0,40)
\put(-60,0){
\begin{picture}(0,30)(-5,10)
\thicklines
\put(-10,20){\circle*{3}}
\put(10,20){\circle*{3}}
\put(-10,40){\circle*{3}}
\put(10,40){\circle*{3}}
\qbezier(-10,20)(-10,20)(-10,40)
\qbezier(10,20)(10,20)(10,40)
\qbezier(-10,20)(-10,20)(10,20)
\qbezier(-10,40)(-10,40)(10,40)
\put(-14,30){\makebox(0,0)[cc]{$a$}}
\put(14,30){\makebox(0,0)[cc]{$a$}}
\put(0,44){\makebox(0,0)[cc]{$a$}}
\put(0,24){\makebox(0,0)[cc]{$a$}}
\put(0,10){\makebox(0,0)[cc]{\small $C_{4}$}}
\end{picture}}
\put(0,10){
\begin{picture}(0,40)(0,10)
\thicklines
\put(-10,20){\circle*{3}}
\put(10,20){\circle*{3}}
\qbezier(-10,20)(0,20)(10,20)
\qbezier(-10,20)(0,00)(10,20)
\qbezier(-10,20)(0,40)(10,20)
\put(0,0){\makebox(0,0)[cc]{\small $\Theta_{3}$}}
\end{picture}}
\put(60,0){
\begin{picture}(0,60)(0,10)
\thicklines
\put(-10,20){\circle*{3}}
\put(10,20){\circle*{3}}
\qbezier(-10,20)(0,15)(10,20)
\qbezier(-10,20)(0,20)(10,20)
\qbezier(-10,20)(0,25)(10,20)
\put(-10,40){\circle*{3}}
\put(10,40){\circle*{3}}
\qbezier(-10,40)(0,35)(10,40)
\qbezier(-10,40)(0,40)(10,40)
\qbezier(-10,40)(0,45)(10,40)
\qbezier(-10,20)(-15,30)(-10,40)
\qbezier(-10,20)(-10,30)(-10,40)
\qbezier(-10,20)(-5,30)(-10,40)
\qbezier(10,20)(15,30)(10,40)
\qbezier(10,20)(10,30)(10,40)
\qbezier(10,20)(5,30)(10,40)
\put(0,10){\makebox(0,0)[cc]{\small $C_{4} (\Theta_{3})$}}
\end{picture}}
\end{picture}
\caption{Graphs $G = C_{n}$, $K_{a} = \Theta_{s}$ and $C_{n} (\Theta_{s})$ for $n=4$ and $s=3$.} \label{fig2}
\end{figure}

Since in this case $K_{a}^{'} = B_{s}$, we get:
$$
H(K_{a}) := H(\Theta_{s}) = \frac{1}{\sigma + 1} \left[ \sigma + (-\sigma)^{s} \right], \qquad  H(K'_{a}) := H(B_{s}) = (-1)^{s-1} \sigma^{s}.
$$
Denote
$$
\alpha := \alpha_{a}  = \frac{1}{\sigma} \left[  (\sigma + 1) H(\Theta_{s}) + H(B_{s}) \right], \quad
\beta := \beta_{a} = \frac{1}{\sigma} \left[ H(\Theta_{s}) + H(B_{s}) \right],  \quad \text{\rm and} \quad  \gamma := \gamma_{a}.
$$
Recall that $\operatorname{Ch} (C_{n}) = \prod_{i=1}^{n} a_{i} - w$ and $p(C_{n}) = q(C_{n}) = n$.
Then
\begin{eqnarray*}
H(C_{n} (\Theta_{s})) &  = &  \beta^{n} ( \gamma^{n} - (-\sigma) ) = (\beta \gamma)^{n} + \sigma \beta^{n} = (\beta - \alpha)^{n} + \sigma \beta^{n} = ( - H(\Theta_{s}))^{n} + \sigma \beta^{n} \cr
& = &  \left( - \frac{1}{\sigma + 1} \left[ \sigma + (-\sigma)^{s} \right] \right)^{n} + \sigma \frac{1}{\sigma^{n}} \left[ H(\Theta_{s}) + H(B_{s}) \right]^{n} \cr
& = & \left[ - \frac{\sigma + (-\sigma)^{s}}{\sigma + 1} \right]^{n} + \frac{1}{\sigma^{n-1}} \left[ \frac{\sigma + (-\sigma)^{s}}{\sigma + 1} + (-1)^{s-1} \sigma^{s} \right]^{n}.
\end{eqnarray*}
\end{example}

\section{Yamada polynomial of spatial graphs}

Next we will consider the Yamada polynomial of spatial graphs. Through the paper we work in the piecewise-linear category.  Let $G$ be a graph embedded in $\mathbb R^3$, we say $G$ is a \emph{spatial graph}. If for each vertex $v$ of $G$ there exists a neighborhood $U_{v}$  of $v$ and a flat plane $P_{v}$ such that $G \cap U_{v} \subset P_{v}$, then we say that $G$ is a \emph{flat vertex graph}. For two spatial graphs $G$ and $G'$, if there exists an isotopy $\phi_{t} : \mathbb R^{3} \to \mathbb R^{3}$, $t \in [0,1]$ such that $\phi_{0} = \operatorname{id}$ and $\phi_{1} (G) = G'$, then we say that $G$ and $G'$ are \emph{ambient isotopic as  pliable vertex graphs (pliably isotopic)}. For two flat vertex graphs $G$, $G'$, if there exists an isotopy $\phi_{t} : \mathbb R^{3} \to \mathbb R^{3}$ with $t \in [0,1]$ such that $h_{0} = \operatorname{id}$, $h_{1}(G) = (G')$,  and $h_{t}(G)$ are flat vertex graphs for each $t \in [0,1]$, then we say that $G$ and $G'$ are \emph{ambient isotopic as flat vertex graphs (flatly isotopic)}.

Let $G \subset \mathbb R^{3}$ be a spatial graph. According to~\cite{YA}, we say that a projection $p : \mathbb R^{3} \to \mathbb R^{2}$ is regular projection corresponding to $G$ if each multi point of $p(G)$ is a double point with two transversal edges. The projection with information about the over/under crossing at all crossings is a \emph{diagram} of $G$.  Classical Reidemeister moves $\mathcal R_{0}$, $\mathcal R_{1}$, $\mathcal R_{2}$ and $\mathcal R_{3}$  of knot diagrams and  Reidemeister moves $\mathcal R_{4}$, $\mathcal R_{5}$ and $\mathcal R_{6}$ of neighbourhoods of graphs vertices are presented in Figure~\ref{fig3}.
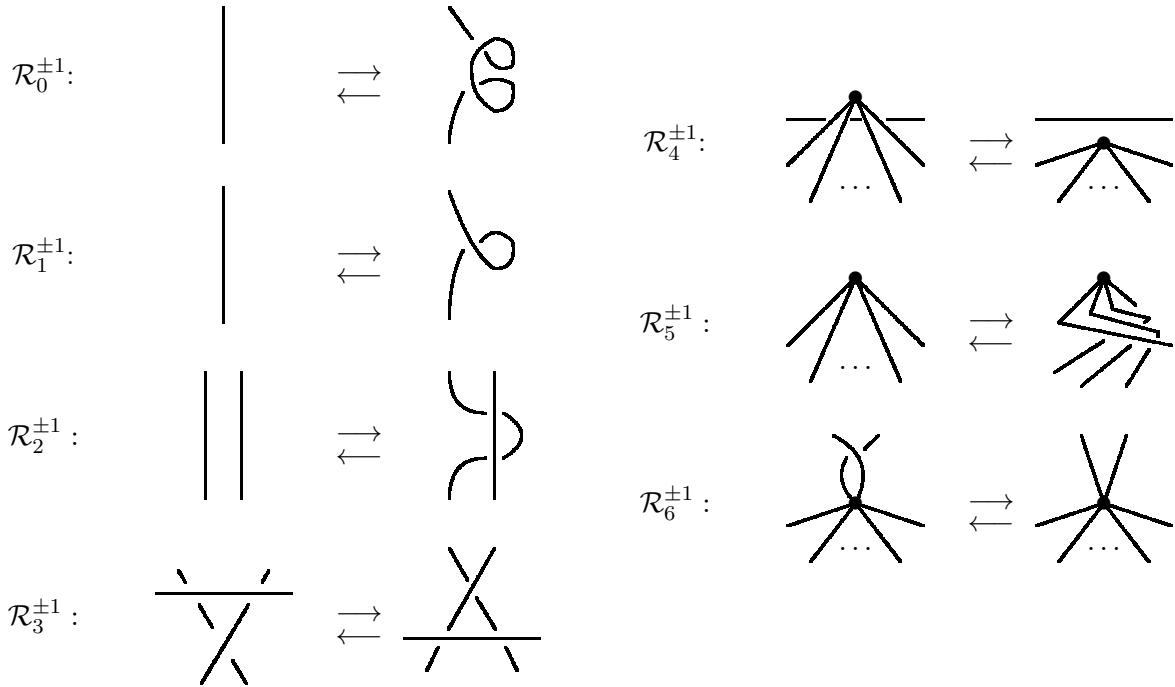
\begin{figure}[h]
\begin{center}
\unitlength=.6mm
\begin{picture}(0,150)(0,30)
\thicklines
\put(-70,0){\begin{picture}(0,30)
 \put(-50,160){\makebox(0,0)[cc]{$\mathcal R_0^{\pm 1}$:}}
 \qbezier(-10,145)(-10,145)(-10,175)
 \put(20,160){\makebox(0,0)[cc]{$\longrightarrow$}}
\put(20,155){\makebox(0,0)[cc]{$\longleftarrow$}}
\qbezier(40,145)(40,150)(43,156)  \qbezier(54,158)(50,160)(47,158) \qbezier(50,152)(55,152)(54,158)
\qbezier(45,160)(45,155)(50,152) \qbezier(45,160)(44,165)(50,168) \qbezier(50,168)(55,168)(54,162)
\qbezier(54,162)(50,160)(48,165)
\qbezier(40,175)(40,175)(45,168)
 \put(-50,120){\makebox(0,0)[cc]{$\mathcal R_1^{\pm 1}$:}}
\qbezier(-10,105)(-10,105)(-10,135)
\put(20,120){\makebox(0,0)[cc]{$\longrightarrow$}}
\put(20,115){\makebox(0,0)[cc]{$\longleftarrow$}}
\qbezier(40,106)(40,115)(43,121) \qbezier(40,134)(45,120)(50,117)
\qbezier(50,117)(55,117)(54,123) \qbezier(54,123)(50,127)(47,123)
\put(-50,80){\makebox(0,0)[cc]{$\mathcal R_2^{\pm 1}:$}}
\qbezier(-14,66)(-14,66)(-14,94) \qbezier(-6,66)(-6,66)(-6,94)
\put(20,80){\makebox(0,0)[cc]{$\longrightarrow$}}
\put(20,75){\makebox(0,0)[cc]{$\longleftarrow$}}
\qbezier(50,66)(50,66)(50,94) \qbezier(40,94)(40,85)(48,85)
\qbezier(40,66)(40,75)(48,75) \qbezier(52,75)(60,80)(52,85)
\put(-50,40){\makebox(0,0)[cc]{$\mathcal R_3^{\pm 1}:$}}
\put(20,40){\makebox(0,0)[cc]{$\longrightarrow$}}
\put(20,35){\makebox(0,0)[cc]{$\longleftarrow$}}
\qbezier(-25,45)(-25,45)(5,45) \qbezier(-1.5,47.5)(-1.5,47.5)(0,50)
\qbezier(-15,25)(-15,25)(-4.5,42.5)
\qbezier(-20,50)(-20,50)(-18.5,47.5)
\qbezier(-15.5,42,5)(-15.5,42.5)(-12.5,37.5)
\qbezier(-8,30)(-8,30)(-5,25)
\qbezier(30,35)(30,35)(60,35)
\qbezier(35,28)(35,28)(37.5,33)
\qbezier(39.75,37)(39.75,37)(50,55)
\qbezier(55,28)(55,28)(52.5,33)
\qbezier(50.25,37)(50.25,37)(46,44)
\qbezier(40,55)(40,55)(44,48)
\end{picture}}
\put(70,25){\begin{picture}(0,0)
\thicklines \put(-50,120){\makebox(0,0)[cc]{$\mathcal R_4^{\pm 1}$:}}
\put(-10,130){\circle*{3}}
\qbezier(-25,115)(-25,115)(-10,130)
\qbezier(-20,107)(-20,107)(-10,130)
\qbezier(0,107)(0,107)(-10,130)
\qbezier(5,115)(5,115)(-10,130)
\put(-10,110){\makebox(0,0)[cc]{$\ldots$}}
\qbezier(-25,125)(-25,125)(-17,125)
\qbezier(-11,125)(-11,125)(-9,125)
\qbezier(-3,125)(-3,125)(5,125)
\put(20,120){\makebox(0,0)[cc]{$\longrightarrow$}}
\put(20,115){\makebox(0,0)[cc]{$\longleftarrow$}}
\qbezier(30,115)(30,115)(45,120)
\qbezier(35,107)(35,107)(45,120)
\qbezier(55,107)(55,107)(45,120)
\qbezier(60,115)(60,115)(45,120)
\put(45,110){\makebox(0,0)[cc]{$\ldots$}}
\qbezier(30,125)(30,125)(60,125)
\put(45,120){\circle*{3}}
\put(-50,80){\makebox(0,0)[cc]{$\mathcal R_5^{\pm 1}:$}}
\put(-10,90){\circle*{3}}
\qbezier(-25,75)(-25,75)(-10,90)
\qbezier(-20,67)(-20,67)(-10,90)
\qbezier(0,67)(0,67)(-10,90)
\qbezier(5,75)(5,75)(-10,90)
\put(-10,70){\makebox(0,0)[cc]{$\ldots$}}
\put(20,80){\makebox(0,0)[cc]{$\longrightarrow$}}
\put(20,75){\makebox(0,0)[cc]{$\longleftarrow$}}
\put(45,90){\circle*{3}}
\qbezier(35,80)(35,80)(45,90) \qbezier(35,80)(35,80)(60,75)
\qbezier(42,82)(42,82)(45,90) \qbezier(42,82)(42,82)(57,78) \qbezier (57,78)(57,78)(57,77) \qbezier(55,74)(55,74)(50,66)
\qbezier(47,83)(47,83)(45,90) \qbezier(47,83)(47,83)(55,81) \qbezier(55,81)(55,81)(54,80) \qbezier(51,75)(51,75)(40,66)
\qbezier(52,84)(52,84)(45,90) \qbezier(45,76)(45,76)(34,69)
\put(-50,40){\makebox(0,0)[cc]{$\mathcal R_6^{\pm 1}:$}}
\put(20,40){\makebox(0,0)[cc]{$\longrightarrow$}}
\put(20,35){\makebox(0,0)[cc]{$\longleftarrow$}}
\qbezier(-25,35)(-25,35)(-10,40)
\qbezier(-20,27)(-20,27)(-10,40)
\qbezier(0,27)(0,27)(-10,40)
\qbezier(5,35)(5,35)(-10,40)
\put(-10,30){\makebox(0,0)[cc]{$\ldots$}}
\qbezier(-10,40)(-5,50)(-15,55)
\qbezier(-10,40)(-15,45)(-12,50)
\qbezier(-8,52)(-8,52)(-5,55)
\put(-10,40){\circle*{3}}
\qbezier(30,35)(30,35)(45,40)
\qbezier(35,27)(35,27)(45,40)
\qbezier(55,27)(55,27)(45,40)
\qbezier(60,35)(60,35)(45,40)
\put(45,30){\makebox(0,0)[cc]{$\ldots$}}
\qbezier(45,40)(45,40)(40,55)
\qbezier(45,40)(45,40)(50,55)
\put(45,40){\circle*{3}}
\end{picture} }
\end{picture}
\end{center} \caption{The Reidemeister moves.} \label{fig3}
\end{figure}
We say the deformation generated by $\mathcal R_{1}, \ldots, \mathcal R_{6}$ is \emph{pliable deformation}, one generated by $\mathcal R_{1}, \ldots, \mathcal R_{5}$ is a \emph{flat deformation}, ant we say one generated by $\mathcal R_{0}$ and $\mathcal R_{2}, \ldots, \mathcal R_{4}$ is a \emph{regular deformation}.

Let $g$ be a diagram of $G$. For any double point, S.~Yamada~\cite{YA} defined the spins of $+1$, $-1$ and $0$, which are denoted by $S_+$, $S_-$ and $S_0$, as shown in Figure~\ref{fig4}.
\begin{figure}[!ht]
\centering
\unitlength=0.8mm
\begin{picture}(0,40)(0,0)
\thicklines
\qbezier(-80,10)(-80,10)(-60,30)
\qbezier(-80,30)(-80,30)(-72,22)
\qbezier(-60,10)(-60,10)(-68,18)
\put(-40,20){\makebox(0,0)[cc]{$\longrightarrow$}}
\qbezier(-20,10)(-10,20)(-20,30)
\qbezier(0,10)(-10,20)(0,30)
\put(-10,0){\makebox(0,0)[cc]{$S_{+}$}}
\put(30,0){\makebox(0,0)[cc]{$S_{-}$}}
\put(70,0){\makebox(0,0)[cc]{$S_{0}$}}
\qbezier(20,30)(30,20)(40,30)
\qbezier(20,10)(30,20)(40,10)
\qbezier(60,30)(80,10)(80,10)
\qbezier(60,10)(80,30)(80,30)
\put(70,20){\circle*{3}}
\end{picture}
\caption{Spins of $+1$, $-1$, and $0$.}  \label{fig4}
\end{figure}
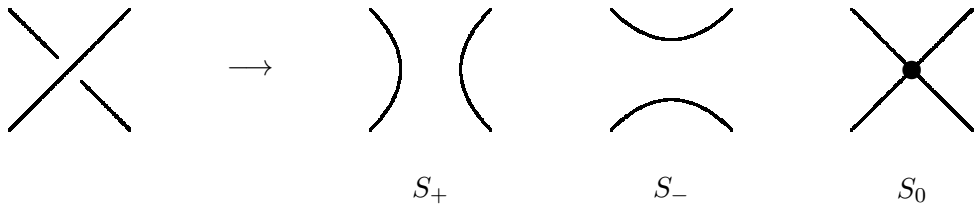

Let $S$ be the plane graph obtained from $g$ by replacing each double point with a spin.  $S$ is called {\it a state} on $g$. Denote the set of all states by $U(g)$. Put
$$
c(g|S)= A^{m_1-m_2},
$$
where $m_1$ and $m_2$ are the numbers of double points with spin $S_+$ and $S_-$, respectively, used to obtain $S$ from $g$.

\smallskip

\begin{definition}~\cite{YA}\label{dsg}
{\it The Yamada polynomial} of a diagram $g$ of a spatial graph $G$ is a Laurent polynomial in variable $A$ defined as follows
$$
R[g] = R[g](A) : =\sum_{S\in U(g)} c(g|S)H(S),
$$
where $H(S) = h(S)(-1,-A-2-A^{-1})$.
\end{definition}

The role of Yamada polynomial in studying of spatial graphs is clear from the following result.

\smallskip

\begin{theorem} \cite{YA} Yamada polynomial has the following properties.
\begin{enumerate}
\item $R[g]$ is a regular deformation invariant of a  diagram $g$.
\item $R[g]$ is a flat deformation invariant of a diagram $g$ up to multiplying $(-A)^{n}$ for some integer $n$.
\item If $g$ is a diagram of a graph whose maximum degree is less then $4$, then $R(g)$ is a pliable deformation invariant of $g$ up to multiplying $(-A)^{n}$ for some integer $n$.
\end{enumerate}
\end{theorem}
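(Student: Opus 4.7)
The plan is to establish each of the three claims by verifying invariance of the state sum $R[g]$ under every Reidemeister move in the corresponding class, up to the stated $(-A)^{n}$ factor. The essential tool is the skein-type expansion that is built into \defref{dsg}: if $c$ is a distinguished crossing of $g$ and $g_{+}$, $g_{-}$, $g_{0}$ denote the diagrams obtained by replacing $c$ by the spins $S_{+}$, $S_{-}$, $S_{0}$, then splitting the state sum over $U(g)$ according to the spin chosen at $c$ yields
\begin{equation*}
R[g] \;=\; A\,R[g_{+}] \;+\; A^{-1}\,R[g_{-}] \;+\; R[g_{0}].
\end{equation*}
Iterating at every crossing reduces the invariance under any given Reidemeister move to a finite list of identities among values of $H$ on plane graphs (the states); each such identity will be handled using the six properties (1)--(6) of the graph polynomial $H$ listed earlier, together with \lemref{lemma-dv} and \propref{ptv}.

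For claim~(1) I would verify invariance under the regular moves $\mathcal{R}_{0}$, $\mathcal{R}_{2}$, $\mathcal{R}_{3}$, $\mathcal{R}_{4}$. The move $\mathcal{R}_{2}$ produces nine states after the double skein expansion; the cross terms involving one $S_{+}$ and one $S_{-}$ contribute with weights $A\cdot A^{-1}=1$ and their values of $H$ cancel by the delete--contract relation~(2) and the isthmus vanishing~(6), while the remaining terms rebuild the state sum of the flattened diagram. For $\mathcal{R}_{3}$ the cleanest route is inductive: expand only one of the three crossings on each side and apply the just-established $\mathcal{R}_{2}$ identity to each resulting sub-diagram. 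The moves $\mathcal{R}_{0}$ and $\mathcal{R}_{4}$ are treated similarly by a local skein expansion followed by \propref{ptv} applied to the plane graphs near the vertex.

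For claim~(2) the additional moves are $\mathcal{R}_{1}$ and $\mathcal{R}_{5}$. In the $\mathcal{R}_{1}$ case the three resolutions of the unique crossing produce plane graphs containing either a small disjoint loop or an isthmus; combining properties (3), (4) and (6) with the value $H(C_{n})=\sigma$ from \lemref{lemma-dv} collapses the whole expression to $-A^{\pm 1}\cdot R[g']$, where $g'$ is the unkinked diagram. This is precisely the source of the $(-A)^{n}$ ambiguity. Invariance under $\mathcal{R}_{5}$ is proved by the same strategy as $\mathcal{R}_{4}$, with \propref{ptv} matching the near-vertex plane-graph contributions. For claim~(3) one further adds the edge-rotation move $\mathcal{R}_{6}$. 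The hypothesis of maximum degree strictly less than $4$ is used crucially here: with at most three edges incident at a vertex, the cyclic order at the vertex does not affect $H$ of the resulting state (up to the $(-A)^{n}$ factor), so invariance follows; at a vertex of degree four or more the rotation can genuinely change the planar structure and alter $H$.

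The main obstacle will be the bookkeeping inside the $\mathcal{R}_{3}$ verification, where $27$ states on each side must be organized and matched; the inductive reduction to $\mathcal{R}_{2}$ sketched above is essential to keep the argument of tractable length, as a direct $27$-term comparison quickly becomes unwieldy. A second, more conceptual subtlety is pinpointing the sharp role of the degree bound in~(3), which requires a careful examination of exactly where and how the planar cyclic order of the edges incident to a vertex enters the state sum under $\mathcal{R}_{6}$, and of why only at a trivalent (or lower) vertex this dependence collapses into the $(-A)^{n}$ normalization.
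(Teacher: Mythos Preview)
The paper does not give its own proof of this theorem: it is quoted from Yamada's original paper~\cite{YA} and stated without argument, so there is nothing in the present paper to compare your proposal against. Your outline is the standard route (and essentially the one Yamada himself takes): expand each crossing via the skein relation built into the state sum, and reduce invariance under each Reidemeister move to identities among values of $H$ on plane graphs, checked with the deletion--contraction and loop/isthmus rules.

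Two small points worth tightening if you carry this out in full. First, $\mathcal{R}_{0}$ as drawn in the paper is a two-crossing move (a pair of opposite curls), so it is most cleanly handled as a consequence of $\mathcal{R}_{1}$ applied twice, the two $(-A)^{\pm}$ factors cancelling; treating it ``similarly to $\mathcal{R}_{4}$'' via \propref{ptv} is not quite the right picture. Second, the exact power of $(-A)$ produced by a single $\mathcal{R}_{1}$ move is $(-A)^{\pm 2}$ in Yamada's normalization, not $(-A)^{\pm 1}$; this does not affect the statement (which only asserts invariance up to some power of $-A$), but it is worth getting right when you do the computation.
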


\smallskip

Here we recall only some properties of the Yamada polynomial.

\smallskip

\begin{prop}~\cite{YA}\label{pr}
The following properties hold.
\begin{enumerate}
\item Let $g_{1}\cup g_{2}$ be a disjoint union of diagrams $g_{1}$ and $g_{2}$, then
$$R[g_{1}\cup g_{2}]= R[g_{1}]R[g_{2}].$$
\item  Let $g_{1}\cdot g_{2}$ be a union of diagrams $g_{1}$ and $g_{2}$ having one common point, then
$$
R[g_{1}\cdot g_{2}] = -R[g_{1}]R[g_{2}].
$$
\item If $g$ has an isthmus, then  $R[g]= 0$.
\end{enumerate}
\end{prop}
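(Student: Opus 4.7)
The plan is to derive each part directly from the defining state-sum formula $R[g]=\sum_{S\in U(g)}c(g\mid S)\,H(S)$ of Definition~\ref{dsg}, reducing each geometric claim about diagrams to the corresponding algebraic property of the underlying graph polynomial $H$ listed immediately after Definition~2.

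For parts~(1) and~(2), the key observation is that in both $g_{1}\cup g_{2}$ and $g_{1}\cdot g_{2}$ the set of crossings partitions cleanly between $g_{1}$ and $g_{2}$, since in the one-point case the shared point is a graph vertex rather than a crossing. Consequently a state on the combined diagram is uniquely determined by a pair $(S_{1},S_{2})\in U(g_{1})\times U(g_{2})$, giving a bijection of state sets, and the crossing weight factors as $c(g_{1}\cup g_{2}\mid S)=c(g_{1}\mid S_{1})\,c(g_{2}\mid S_{2})$ (and analogously in the one-point case). The resulting plane graph $S$ is the disjoint union $S_{1}\cup S_{2}$ in case~(1) and the one-point union $S_{1}\cdot S_{2}$ in case~(2). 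Invoking properties~(4) and~(5) of $H$,
\[
H(S_{1}\cup S_{2})=H(S_{1})H(S_{2}),\qquad H(S_{1}\cdot S_{2})=-H(S_{1})H(S_{2}),
\]
and splitting the double sum into a product of single sums yields $R[g_{1}\cup g_{2}]=R[g_{1}]R[g_{2}]$ and $R[g_{1}\cdot g_{2}]=-R[g_{1}]R[g_{2}]$ respectively.

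For part~(3), suppose $e$ is an isthmus of $g$, meaning $e$ is an edge of the underlying abstract graph of $g$ whose removal disconnects it. Since $e$ is not a crossing, every spin choice preserves $e$ unaltered. The three local resolutions $S_{+}, S_{-}, S_{0}$ at any other crossing only reconnect the four incident arcs within a small neighborhood of that crossing, so no resolution can create a new path between the two components of $g\setminus e$. Hence $e$ remains an isthmus in every state $S\in U(g)$, and by property~(6) of $H$ we have $H(S)=0$ for all such $S$, whence $R[g]=0$.

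The main obstacle I anticipate lies in part~(3): one must argue carefully that no combination of spins can destroy the isthmus property. Although this is intuitive from the local nature of the spins, making it precise requires clarifying what ``isthmus of a diagram'' means---namely, an isthmus of the underlying abstract graph obtained by forgetting over/under information and treating crossings as transparent---and verifying that each of $S_{+}, S_{-}, S_{0}$ respects the decomposition of $g\setminus e$. Parts~(1) and~(2), by contrast, follow essentially mechanically once the bijective factorization of the state set is noted.
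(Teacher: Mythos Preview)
The paper does not actually supply its own proof of this proposition: it is stated with the citation~\cite{YA} and introduced by ``Here we recall only some properties of the Yamada polynomial,'' so there is nothing in the present paper to compare against. Your argument is the standard one and is correct: the state set of $g_{1}\cup g_{2}$ (resp.\ $g_{1}\cdot g_{2}$) factors as $U(g_{1})\times U(g_{2})$ because all crossings lie in one or the other piece, the weight $c$ is multiplicative in the crossings, and properties~(4), (5), (6) of $H$ then do the rest. This is exactly the route taken in Yamada's original paper, so your proposal agrees with the source the paper is citing.

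Your self-identified ``obstacle'' in part~(3) is not really one. An isthmus of the diagram is by definition an edge of the underlying graph (vertices and edges of $g$, with crossings ignored), and each spin $S_{+},S_{-},S_{0}$ modifies only a small disc around a crossing, which by hypothesis is disjoint from the edge $e$; the two sides of $g\setminus e$ are therefore untouched and $e$ remains a cut-edge in every state. You have already said this; no further justification is needed.
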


\smallskip

\begin{remark}
If a diagram $g$ of $G$ does not have double points, then $R[g]= H(G)$.
\end{remark}

Properties of $H(G)$ imply immediately the following properties of $R(G)$.

\smallskip

\begin{prop} \cite{YA}  \label{pv} The following properties hold.

\unitlength=0.6mm
\begin{center}
\begin{picture}(120,14)(91,4)
\thicklines
\put(30,10){\makebox(0,0)[c]{(1)}}
\put(46,10){\makebox(0,0)[c]{$R \, [$}}
\qbezier(52,5)(52,5)(62,15)
\qbezier(52,15)(52,15)(55,12)
\qbezier(59,8)(59,8)(62,5)
\put(65,10){\makebox(0,0)[c]{$]$}}
\put(75,10){\makebox(0,0)[c]{$=$}}
\put(90,10){\makebox(0,0)[c]{$A \; R \, [$}}
\put(100,10){\oval(6,10)[r]}
\put(110,10){\oval(6,10)[l]}
\put(113,10){\makebox(0,0)[c]{$]$}}
\put(120,10){\makebox(0,0)[c]{$+$}}
\put(136,10){\makebox(0,0)[c]{$A^{-1} R \, [$}}
\put(152,15){\oval(10,6)[b]}
\put(152,5){\oval(10,6)[t]}
\put(160,10){\makebox(0,0)[c]{$]$}}
\put(167,10){\makebox(0,0)[c]{$+$}}
\put(176,10){\makebox(0,0)[c]{$R \, [$}}
\qbezier(182,15)(182,15)(192,5)
\qbezier(182,5)(182,5)(192,15)
\put(187,10){\circle*{3}}
\put(195,10){\makebox(0,0)[c]{$] \, ;$}}
\end{picture}

\begin{picture}(120,10)(91,9)
\thicklines
\put(30,10){\makebox(0,0)[c]{(2)}}
\put(46,10){\makebox(0,0)[c]{$R \, [$}}
\qbezier(52,5)(52,5)(57,10)
\qbezier(57,10)(57,10)(52,15)
\qbezier(57,10)(57,10)(52,12)
\put(52.5,10){\circle*{1}}
\put(52.5,8){\circle*{1}}
\qbezier(57,10)(57,10)(67,10)
\qbezier(72,5)(72,5)(67,10)
\qbezier(72,12)(72,12)(67,10)
\qbezier(72,15)(72,15)(67,10)
\put(71.5,10){\circle*{1}}
\put(71.5,8){\circle*{1}}
\put(74.5,10){\makebox(0,0)[c]{$]$}}
\put(83.5,10){\makebox(0,0)[c]{$=$}}
\put(94,10){\makebox(0,0)[c]{$R \, [$}}
\qbezier(100,5)(100,5)(105,10)
\qbezier(105,10)(105,10)(100,15)
\qbezier(105,10)(105,10)(100,12)
\put(100.5,10){\circle*{1}}
\put(100.5,8){\circle*{1}}
\qbezier(115,10)(115,10)(120,5)
\qbezier(115,10)(115,10)(120,12)
\qbezier(115,10)(115,10)(120,15)
\put(119.5,10){\circle*{1}}
\put(119.5,8){\circle*{1}}
\put(122.5,10){\makebox(0,0)[c]{$]$}}
\put(131,10){\makebox(0,0)[c]{$+$}}
\put(141,10){\makebox(0,0)[c]{$R \, [$}}
\qbezier(147,5)(147,5)(152,10)
\qbezier(152,10)(152,10)(147,15)
\qbezier(152,10)(152,10)(147,12)
\put(147.5,10){\circle*{1}}
\put(147.5,8){\circle*{1}}
\qbezier(152,10)(152,10)(157,5)
\qbezier(152,10)(152,10)(157,12)
\qbezier(152,10)(152,10)(157,15)
\put(156.5,10){\circle*{1}}
\put(156.5,8){\circle*{1}}
\put(160.5,10){\makebox(0,0)[c]{$] \, ;$}}
\end{picture}

\begin{picture}(120,14)(91,8)
\thicklines
\put(30,10){\makebox(0,0)[c]{(3)}}
\put(46,10){\makebox(0,0)[c]{$R \, [$}}
\put(57,10){\circle{8}}
\put(53,10){\circle*{3}}
\put(65,10){\makebox(0,0)[c]{$]$}}
\put(72,10){\makebox(0,0)[c]{$=$}}
\put(82,10){\makebox(0,0)[c]{$R \, [$}}
\put(93,10){\circle{8}}
\put(101,10){\makebox(0,0)[c]{$]$}}
\put(110,10){\makebox(0,0)[c]{$=$}}
\put(136,10){\makebox(0,0)[c]{$A + 1 + A^{-1} \, ;$}}
\end{picture}
\end{center}
\smallskip
\smallskip

(4) $R[B_n]=-(-\sigma)^n$, where $B_n$ is the unlinked one-vertex graph with $n$ loops, and $\sigma = A+1+A^{-1}$.

\end{prop}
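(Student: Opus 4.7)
The plan is to deduce all four items from Definition~\ref{dsg}, together with Lemma~\ref{lemma-dv} and the basic properties of $H(\cdot)$ recalled in Section~2.

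For item~(1), I would fix a single crossing $c$ of the diagram $g$ and partition the state sum $R[g]=\sum_{S\in U(g)} c(g|S)\,H(S)$ according to the three admissible spins $S_+$, $S_-$, $S_0$ at $c$. Writing $g_+$, $g_-$, $g_0$ for the three diagrams obtained from $g$ by resolving $c$ with these spins, every state of $g$ restricts uniquely to a state of exactly one of $g_\pm$ or $g_0$, with the same value of $H$. Passing from such a restricted state back to the corresponding state of $g$ shifts $m_1-m_2$ by $+1$, $-1$, or $0$, so the factor $c(g|S)$ splits as $A\cdot c(g_+|S)$, $A^{-1}\cdot c(g_-|S)$, and $c(g_0|S)$, respectively. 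Summing the three partial sums yields the stated skein identity, once one verifies that the pictures on the right-hand side of (1) match the three resolutions $S_+$, $S_-$, $S_0$ in Figure~\ref{fig4}.

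For items~(2)--(4) I would reduce everything to statements about $H(\cdot)$. In (2), all the diagrams involved have no crossings, so $R=H$ by the Remark preceding the proposition, and the claimed identity is exactly the deletion/contraction rule $H(G)=H(G/e)+H(G-e)$ for a non-loop edge, which is property~(2) of $H$ listed in Section~2. In (3), the left diagram is a crossing-free diagram of the cycle $C_2$ and the middle diagram is a crossing-free diagram of $C_1$; by Lemma~\ref{lemma-dv}(ii) both evaluate to $\sigma=A+1+A^{-1}$. In (4), the unlinked bouquet $B_n$ admits a crossing-free diagram, so $R[B_n]=H(B_n)=(-1)^{n-1}\sigma^n=-(-\sigma)^n$ by Lemma~\ref{lemma-dv}(iii).

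The proof is essentially bookkeeping on the state sum, so there is no serious obstacle. The only delicate point is in (1): one must correctly match the three resolutions $S_+$, $S_-$, $S_0$ of Figure~\ref{fig4} to the pictures on the right-hand side of the displayed equation and confirm the sign convention (so that $S_+$ receives the weight $A$ and $S_-$ the weight $A^{-1}$, and not the reverse). Once this pictorial identification is fixed, items~(2)--(4) follow by direct appeal to the Remark and to Lemma~\ref{lemma-dv}.
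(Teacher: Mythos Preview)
The paper does not supply a proof here: it simply cites \cite{YA} and remarks that ``Properties of $H(G)$ imply immediately the following properties of $R(G)$.'' Your proposal is more detailed than what the paper offers, and your argument for item~(1) --- splitting the state sum $\sum_{S} c(g|S)H(S)$ according to the spin chosen at the fixed crossing --- is exactly the standard derivation and is correct.

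There is, however, a genuine gap in your treatment of item~(2). The displayed relation is a \emph{local} one: the small dots in the picture indicate further edges at each vertex, and the rest of the diagram may well contain crossings. So you cannot argue that ``all the diagrams involved have no crossings'' and invoke $R=H$ directly. The correct argument is the one you used for (1): for every state $S$ of $g$ the non-loop edge $e$ survives (it is not a crossing), so $H(S)=H(S/e)+H(S-e)$ by the deletion/contraction property of $H$; since the crossings of $g$, $g/e$, and $g-e$ coincide, the states correspond bijectively with identical weights $c(g|S)$, and summing gives $R[g]=R[g/e]+R[g-e]$. This is presumably what the paper means by ``immediately,'' but your write-up as it stands does not cover the general case.

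A minor inaccuracy in item~(3): the first pictured diagram has a single vertex on a circle, so it is $C_{1}$ (equivalently $B_{1}$), not $C_{2}$. This does not affect the conclusion, since Lemma~\ref{lemma-dv}(ii) gives $H(C_{n})=\sigma$ for every $n$. Your argument for item~(4) is fine.
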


\smallskip

The formula from Proposition \ref{ptv} about $H(G)$ implies the similar  formula for polynomial $R[g]$.

\smallskip

\begin{prop}\label{cdtv} \cite{paper1}
Let  $g_{1}:g_{2}$ be the union of two diagrams $g_{1}$ and $g_{2}$ having only two common vertices $u$ and $v$. Let $k_{1}$ and $k_{2}$ be diagrams obtained from $g_{1}$ and $g_{2}$, respectively, by identifying $u$ and $v$. Then
\begin{equation*} 
R[g_{1}:g_{2}] = \frac{1}{\sigma} \Big( R[k_{1}] R[k_{2}] + (\sigma+1) R[g_{1}] R[g_{2}]  + R[k_{1}] R[g_{2}] + R[k_{2}] R[g_{1}] \Big).
\end{equation*}
\end{prop}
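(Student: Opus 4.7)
The plan is to reduce Proposition \ref{cdtv} to its graph-theoretic analogue Proposition \ref{ptv} by expanding $R[g_{1}:g_{2}]$ as a state sum and applying that identity state-by-state. By Definition \ref{dsg}, $R[g_{1}:g_{2}] = \sum_{S \in U(g_{1}:g_{2})} c(g_{1}:g_{2}|S)\, H(S)$, where each state $S$ arises by resolving every double point of $g_{1}:g_{2}$ into one of the spins $S_{+},S_{-},S_{0}$. The first observation is that, since $g_{1}$ and $g_{2}$ share only the two vertices $u,v$ (in particular no edges and no crossings), every double point of $g_{1}:g_{2}$ lies in exactly one of $g_{1}$ or $g_{2}$. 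This yields a natural bijection $U(g_{1}:g_{2}) \leftrightarrow U(g_{1}) \times U(g_{2})$, $S \leftrightarrow (S_{1},S_{2})$, together with the multiplicative identities $c(g_{1}:g_{2}|S) = c(g_{1}|S_{1})\,c(g_{2}|S_{2})$ and $S = S_{1} : S_{2}$ as plane graphs meeting in $\{u,v\}$.

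Next I would apply Proposition \ref{ptv} to each resolved plane graph $S_{1}:S_{2}$. Writing $S_{i}'$ for the graph obtained from $S_{i}$ by identifying $u$ and $v$, that proposition gives
$$H(S_{1}:S_{2}) = \frac{1}{\sigma}\bigl[H(S_{1}')H(S_{2}') + (\sigma+1)H(S_{1})H(S_{2}) + H(S_{1}')H(S_{2}) + H(S_{2}')H(S_{1})\bigr].$$
Substituting this into the state sum and using the factorization of the weights, $R[g_{1}:g_{2}]$ splits into four independent double sums over $U(g_{1})\times U(g_{2})$. Each of these factors as a product of two single sums, one over $U(g_{1})$ and one over $U(g_{2})$. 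To identify the sums as $R[g_{i}]$ or $R[k_{i}]$, I would use the further observation that identifying the two vertices $u,v$ of a diagram does not create, destroy, or alter any crossing, so the map $S_{i}\mapsto S_{i}'$ is a bijection $U(g_{i}) \to U(k_{i})$ with $c(g_{i}|S_{i}) = c(k_{i}|S_{i}')$. Hence $\sum_{S_{i}} c(g_{i}|S_{i}) H(S_{i}') = R[k_{i}]$ and $\sum_{S_{i}} c(g_{i}|S_{i}) H(S_{i}) = R[g_{i}]$, and assembling the four products recovers the right-hand side of the proposition.

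The only step that requires a moment's attention is the compatibility between crossing resolution and vertex identification, namely that resolving the crossings in $k_{i}$ yields precisely the graphs $S_{i}'$ defined above. Because $u$ and $v$ are graph vertices and every double point occurs on an edge away from $\{u,v\}$, this commutativity is transparent and poses no real obstacle. The remainder of the argument is then essentially bookkeeping on top of Proposition \ref{ptv}, with no further input specific to spatial graphs required.
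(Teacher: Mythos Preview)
Your argument is correct and is exactly the approach the paper has in mind: the paper does not give a detailed proof here but simply states that the formula in Proposition~\ref{ptv} for $H(G)$ implies the analogous formula for $R[g]$, and your state-sum expansion together with the factorization $U(g_{1}:g_{2})\cong U(g_{1})\times U(g_{2})$ and the crossing-preserving bijection $U(g_{i})\to U(k_{i})$ is precisely how that implication is carried out.
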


\smallskip

Let $G$ be a connected plane labelled graph, where each edge $a$ with terminal vertices $u$ and $v$ is labelled by a diagram $g_{a}$ having  two vertices $u_{a*}$ and $v_{a*}$ indicated. We define $G(g_{a})$ to be the spatial graph (its diagram) obtained from $G$ by replacing an edge $a= u_{a}v_{a}$ of $G$ by a connected diagram $g_{a}$ applying an  identification $u_{a}$ with $u_{a*}$ and an identification $v_{a}$ with $v_{a*}$ in such a way that  $g_{a}$ has only vertices $u_{a}=u_{a*}$ and $v_{a}=v_{a*}$ in common with $G-a$. We define $G(g_{a}, g_{b})$ to be the spatial graph (its diagram) obtained from $G(g_{a})$ by replacing an edge $b= u_{b}v_{b}$ of $G(g_{a})\cap (G-a)$  by a connected diagram $g_{b}$ applying identifications $u_{b}$ with $u_{b*}$ and $v_{b}$ with $v_{b*}$ in such a way that  $g_{b}$ has only vertices $u_{b}=u_{b*}$ and $v_{b}=v_{b*}$ in common with $G(g_{a})$. With the same construction, we can obtain $G(g_{a_1}, g_{a_2}, g_{a_3}, \ldots)$.
In this context we denote by $g_{a}'$ the diagram obtained from $g_{a}$ by identifying vertices $u_{a*}$ and $v_{a*}$ in such a way that no new intersections appear.

\smallskip

\begin{theorem}\label{tcn} \cite{paper1} The following properties hold.
\begin{itemize}
\item[(1)] Let $C_{n}$ be the $n$-cycle graph with edges labelled by $a_{1}, a_{2}, \ldots, a_{n}$, then
\begin{equation}\label{ecn}
R[C_{n}(g_{a_1}, g_{a_2}, \cdots, g_{a_n})] = \prod_{i=1}^n(-R[g_{a_i}])+\sigma\prod_{i=1}^n \Big( \frac{R[g_{a_i}]+R[g_{a_i}']}{\sigma} \Big).
\end{equation}
\item[(2)] Let $\Theta_{s}$ be the $s$-theta-graph with edges labelled by $a_{1}, a_{2}, \ldots, a_{s}$, then
\begin{equation*}\label{eths}
R[\Theta_{s}(g_{a_1}, g_{a_2}, \cdots, g_{a_s})] 
= \frac{(-1)^{s}}{1+\sigma} \prod_{i=1}^sR[g_{a_i}'] + \frac{\sigma}{1+\sigma} \prod_{i=1}^s \Big(\frac{\sigma+1}{\sigma}R[g_{a_i}]+\frac{1}{\sigma}R[g_{a_i}'] \Big).
\end{equation*}
\item[(3)] Let $B_{q}$ be the $q$-bouquet with loops labelled by $a_{1}, a_{2}, \ldots, a_{q}$, then
\begin{equation*}\label{ebp}
R[B_{q}(g_{a_1}, g_{a_2}, \cdots, g_{a_q})] = (-1)^{q-1}\prod_{i=1}^q R[g_{a_i}'].
\end{equation*}
\end{itemize}
\end{theorem}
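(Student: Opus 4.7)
The plan is to establish the three formulas in the order (3), (1), (2), each either directly or by induction, using the one-vertex and two-vertex gluing formulas from \propref{pr}(2) and \propref{cdtv}. The $\alpha_a, \beta_a, \gamma_a$ quantities from Section~2 (with $H(K_a), H(K_a')$ replaced by $R[g_a], R[g_a']$) are the natural bookkeeping devices. Part~(3) is almost immediate: since $B_q$ consists of a single vertex with $q$ loops, every edge-replacement $g_{a_i}$ has both of its attached vertices identified at that vertex, so the contribution of that piece is $g_{a_i}'$. Thus $B_q(g_{a_1}, \ldots, g_{a_q})$ is the one-vertex union of $g_{a_1}', \ldots, g_{a_q}'$, and iterating \propref{pr}(2) exactly $q-1$ times yields the sign $(-1)^{q-1}$ in front of $\prod_{i=1}^q R[g_{a_i}']$.

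Part~(1) I would prove by induction on $n$. The base $n=1$ is $C_1 = B_1$, already treated; note that formula~(1) at $n=1$ collapses to $R[g_{a_1}']$. For the inductive step, single out two adjacent vertices $v_1, v_2$ of $C_n$ and view $C_n(g_{a_1}, \ldots, g_{a_n})$ as the two-vertex union $g_{a_1} : g_A$, where $g_A = g_{a_2} \cdot g_{a_3} \cdots g_{a_n}$ is the end-to-end concatenation of the remaining blocks (each adjacent pair sharing one vertex). Iterated \propref{pr}(2) gives $R[g_A] = (-1)^{n-2}\prod_{i=2}^n R[g_{a_i}]$, and the diagram $g_A'$ obtained by identifying the two endpoints of $g_A$ is precisely $C_{n-1}(g_{a_2}, \ldots, g_{a_n})$, to which the inductive hypothesis applies. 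Substituting these values into \propref{cdtv} and collecting terms should yield the two-term expression in~(1).

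Part~(2) follows by an analogous induction on $s$. The base $s=1$ reduces $\Theta_1(g_{a_1})$ to $g_{a_1}$, and a quick computation confirms that formula~(2) specializes to $R[g_{a_1}]$. For $s \geq 2$, decompose $\Theta_s(g_{a_1}, \ldots, g_{a_s})$ as the two-vertex union of $g_{a_s}$ and $\Theta_{s-1}(g_{a_1}, \ldots, g_{a_{s-1}})$. In \propref{cdtv} the auxiliary diagram $k_1$ associated to $\Theta_{s-1}(g_{a_1}, \ldots, g_{a_{s-1}})$ by vertex identification is the bouquet $B_{s-1}(g_{a_1}, \ldots, g_{a_{s-1}})$, so part~(3) supplies $R[k_1] = (-1)^{s-2} \prod_{i=1}^{s-1} R[g_{a_i}']$, while $R[\Theta_{s-1}(g_{a_1}, \ldots, g_{a_{s-1}})]$ is given by induction. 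Plugging in and simplifying yields~(2).

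The main obstacle I anticipate is the algebraic bookkeeping in the two inductive steps: \propref{cdtv} produces four cross-products, while each target formula contains only two pieces. One must verify that the cross-terms collapse into the compact products $\prod_i(-R[g_{a_i}])$ and $\sigma \prod_i \frac{R[g_{a_i}] + R[g_{a_i}']}{\sigma}$ in~(1), and into $\prod_i R[g_{a_i}']$ and $\prod_i \alpha_{a_i}$ in~(2). This is precisely where the combinations $\alpha_{a_i}, \beta_{a_i}$ introduced in Section~2 play their role: they are exactly the eigen-combinations that diagonalise the transfer-matrix-type recursion produced by \propref{cdtv}, which is also the reason the chain-polynomial substitution $w \mapsto -\sigma$, $a \mapsto \gamma_a$ of \thmref{tger} recovers the same answer term-by-term when one reads it off \emph{via} the chain polynomials $\operatorname{Ch}(C_n)$, $\operatorname{Ch}(\Theta_s)$, $\operatorname{Ch}(B_q)$ listed in Example~\ref{Ecn}.
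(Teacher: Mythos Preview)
Your plan is sound and the algebra you worry about does in fact close up cleanly. The key identity that makes both inductive steps collapse is that \propref{cdtv} can be rewritten as
\[
R[g_1:g_2] \;=\; \beta_{g_1}\,R[k_2] \;+\; \alpha_{g_1}\,R[g_2],
\]
where $\alpha_{g_1}=\frac{(\sigma+1)R[g_1]+R[g_1']}{\sigma}$ and $\beta_{g_1}=\frac{R[g_1]+R[g_1']}{\sigma}$; with this in hand the four cross-terms you anticipated reduce to two, and then the relations $\alpha-\beta=R[g]$ and $(\sigma+1)\beta-\alpha=R[g']$ finish the job in both (1) and (2). I checked: the coefficient in front of $\prod_{i\neq s}R[g'_{a_i}]$ in the $\Theta_s$ step is $(-1)^s\beta_{a_s}-\frac{(-1)^s}{1+\sigma}\alpha_{a_s}=\frac{(-1)^s}{1+\sigma}R[g'_{a_s}]$, exactly as required.

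As for comparison with the paper: the present paper does not actually prove \thmref{tcn}; it is quoted from~\cite{paper1}. The organisation here (presenting \thmref{tger} and Example~\ref{Ecn} first, then stating \thmref{tcn}) strongly suggests that the intended argument in~\cite{paper1} goes through the chain-polynomial route you mention in your final paragraph: one establishes a spatial-graph analogue of \thmref{tger} (replacing $H$ by $R$ and $K_a$ by diagrams $g_a$), and then reads off (1)--(3) from the explicit chain polynomials $\operatorname{Ch}(C_n)$, $\operatorname{Ch}(\Theta_s)$, $\operatorname{Ch}(B_q)$ in Example~\ref{Ecn} via the substitution $w\mapsto-\sigma$, $a\mapsto\gamma_a$. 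Your direct inductive proof via \propref{cdtv} is a genuinely different, more elementary route: it avoids the chain-polynomial machinery entirely and needs only the two-vertex gluing formula. The chain-polynomial approach, on the other hand, is uniform (one master theorem plus three short substitutions) and generalises immediately to any base graph $G$, not just $C_n$, $\Theta_s$, $B_q$. Either is perfectly acceptable here.
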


\smallskip

If every edge of $G$ is labelled by the same diagram $g_{a} = g$, we denote $G(g, \ldots, g)$ shortly by $G(g)$.

\smallskip

\begin{corollary}\label{ccn} \cite{paper1}
If all edges of a graph $G$ are labelled by the same diagram $g$, then
\begin{equation*}
R[C_{n}(g)]=(-R[g])^{n}+\sigma \Big( \frac{R[g]+R[g']}{\sigma} \Big)^{n},
\end{equation*}
\begin{equation*}
R[\Theta_{s}(g)] = \frac{(-1)^{s}}{1+\sigma}R[g']^{s} + \frac{\sigma}{1+\sigma} \Big( \frac{\sigma+1}{\sigma}R[g]+\frac{1}{\sigma}R[g'] \Big)^{s},
\end{equation*}
and
\begin{equation*}
R[B_{q}(g)] = (-1)^{q-1} R[g']^q.
\end{equation*}
\end{corollary}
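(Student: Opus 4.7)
The plan is to derive Corollary \ref{ccn} as a direct specialization of Theorem \ref{tcn} to the case in which every edge of the base graph is labelled by the same diagram $g$. Under this specialization one has $R[g_{a_i}] = R[g]$ and $R[g_{a_i}'] = R[g']$ for every index $i$, so each $i$-indexed product appearing in Theorem \ref{tcn} collapses into a power of a single expression, while the rational factors involving $\sigma$ out front are unaffected.

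For the cycle case I start from \eqref{ecn} and substitute termwise: the product $\prod_{i=1}^n (-R[g_{a_i}])$ becomes $(-R[g])^n$ and $\prod_{i=1}^n (R[g_{a_i}] + R[g_{a_i}'])/\sigma$ becomes $\bigl( (R[g] + R[g'])/\sigma \bigr)^n$, and summing the two contributions yields the first identity. For the theta-graph case the same substitution applied to the corresponding formula of Theorem \ref{tcn} reduces the two $s$-fold products to $R[g']^s$ and $\bigl( (\sigma+1) R[g]/\sigma + R[g']/\sigma \bigr)^s$, respectively; multiplying by the unchanged prefactors $(-1)^s/(1+\sigma)$ and $\sigma/(1+\sigma)$ gives the second identity. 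For the bouquet the single product $\prod_{i=1}^q R[g_{a_i}']$ collapses to $R[g']^q$ and the sign $(-1)^{q-1}$ is retained, producing the third identity.

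The only point that needs to be verified, and indeed the only nontrivial step, is that the geometric construction $G(g,\ldots,g)$ is a legitimate instance of the general labelled construction $G(g_{a_1},\ldots,g_{a_n})$ used in Theorem \ref{tcn}. This is immediate from the definitions given in Section~3, since the gluing prescription at each vertex of $G$ depends only on the combinatorial incidence structure of $G$ and on the pairs of attached vertices of the diagrams $g_{a_i}$, not on whether the labels assigned to distinct edges happen to coincide. Accordingly, no substantive obstacle arises; the corollary is a pure specialization and the three formulas follow line by line from Theorem \ref{tcn}.
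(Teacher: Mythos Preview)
Your proposal is correct and matches the paper's treatment: the paper states Corollary~\ref{ccn} without proof as an immediate specialization of Theorem~\ref{tcn}, and your argument makes explicit precisely this specialization by setting $g_{a_i}=g$ for all $i$ so that each product collapses to a power.
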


Remark that  $C_{n}(g_{a_1}, g_{a_2}, \ldots, g_{a_n})$ is the ``ring of beads'' discussed in~\cite{xj} and~\cite{RE}. By Eq.(\ref{ecn}), we get that Yamada polynomial $R[C_{n}(g_{a_1}, g_{a_2}, \ldots, g_{a_n})]$ of a  graph $C_{n}(g_{a_1}, g_{a_2}, \ldots, g_{a_n})$ is independent of the order in which the subdiagrams occur.

The following example illustrates the applying of Theorem \ref{tcn}.

\begin{example}\label{E4}
Let $\infty_{+}$ be the spatial graph diagram with two vertices and one double point signed by ``$+$'', see Figures~\ref{fig5} and~\ref{fig6}. We denote the mirror image of diagram $\infty_{+}$ by $\infty_{-}$. Direct calculations give Yamada polynomials $R[\infty_{+}]=A^{-2}\sigma$ and $R[\infty_{+}']=\sigma$, where $\sigma = A + 1 + A^{-1}$.

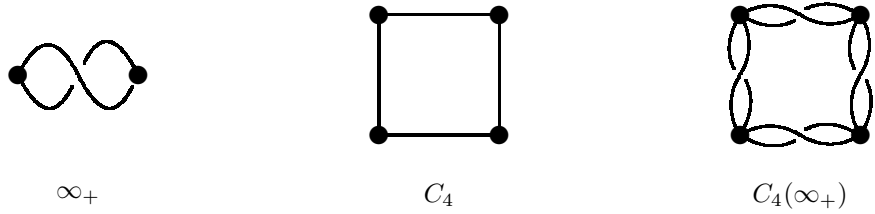
\begin{figure}[!ht]
\centering
\unitlength=0.8mm
\begin{picture}(0,40)(0,0)
\put(0,0){
\begin{picture}(0,30)(0,10)
\thicklines
\put(-10,20){\circle*{3}}
\put(10,20){\circle*{3}}
\put(-10,40){\circle*{3}}
\put(10,40){\circle*{3}}
\qbezier(-10,20)(-10,20)(-10,40)
\qbezier(10,20)(10,20)(10,40)
\qbezier(-10,20)(-10,20)(10,20)
\qbezier(-10,40)(-10,40)(10,40)
\put(0,10){\makebox(0,0)[cc]{\small $C_{4}$}}
\end{picture}}
\put(-60,10){
\begin{picture}(0,40)(0,10)
\thicklines
\put(-10,20){\circle*{3}}
\put(10,20){\circle*{3}}
\qbezier(-10,20)(-5,30)(0,20)
\qbezier(-10,20)(-5,10)(-1,18)
\qbezier(0,20)(5,10)(9,18)
\qbezier(1,22)(5,30)(10,20)
\put(0,0){\makebox(0,0)[cc]{\small $\infty_{+}$}}
\end{picture}}
\put(60,0){
\begin{picture}(0,60)(0,10)
\thicklines
\put(-10,20){\circle*{3}}
\put(10,20){\circle*{3}}
\qbezier(-10,20)(-5,23)(0,20)
\qbezier(-10,20)(-5,17)(-1,19)
\qbezier(0,20)(5,17)(10,20)
\qbezier(1,21)(5,23)(10,20)
\put(-10,40){\circle*{3}}
\put(10,40){\circle*{3}}
\qbezier(-10,40)(-5,43)(0,40)
\qbezier(-10,40)(-5,37)(-1,39)
\qbezier(0,40)(5,37)(10,40)
\qbezier(1,41)(5,43)(10,40)
\qbezier(-10,20)(-13,25)(-10,30)
\qbezier(-10,20)(-7,25)(-9,29)
\qbezier(-10,30)(-7,35)(-10,40)
\qbezier(-11,31)(-13,35)(-10,40)
\qbezier(10,20)(7,25)(10,30)
\qbezier(10,20)(13,25)(11,29)
\qbezier(10,30)(13,35)(10,40)
\qbezier(9,31)(7,35)(10,40)
\put(0,10){\makebox(0,0)[cc]{\small $C_{4} (\infty_{+})$}}
\end{picture}}
\end{picture}
\caption{Diagram $\infty_{+}$, graph $C_{n}$ and disgrsm $C_{n}(\infty_{+})$ for $n=4$.} \label{fig5}
\end{figure}
\noindent
By Theorem \ref{tcn}, the Yamada polynomial of the diagram $C_{n}(\infty_{+})$ presented in Figure~\ref{fig4} is as follows:
$$
R[C_{n}(\infty_{+})]=(-A^{-2}\sigma)^{n}+\sigma(A^{-2}+1)^{n}.
$$

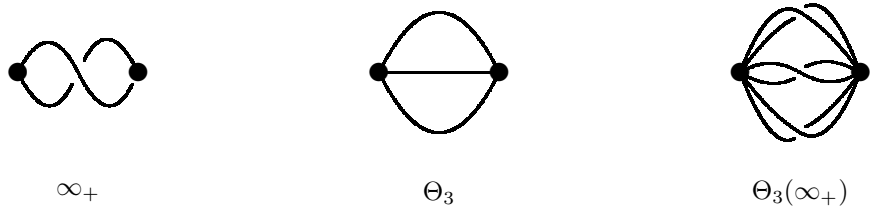
\begin{figure}[!ht]
\centering
\unitlength=0.8mm
\begin{picture}(0,40)
\put(0,10){
\begin{picture}(0,40)(0,10)
\thicklines
\put(-10,20){\circle*{3}}
\put(10,20){\circle*{3}}
\qbezier(-10,20)(0,20)(10,20)
\qbezier(-10,20)(0,00)(10,20)
\qbezier(-10,20)(0,40)(10,20)
\put(0,0){\makebox(0,0)[cc]{\small $\Theta_{3}$}}
\end{picture}}
\put(-60,10){
\begin{picture}(0,40)(0,10)
\thicklines
\put(-10,20){\circle*{3}}
\put(10,20){\circle*{3}}
\qbezier(-10,20)(-5,30)(0,20)
\qbezier(-10,20)(-5,10)(-1,18)
\qbezier(0,20)(5,10)(9,18)
\qbezier(1,22)(5,30)(10,20)
\put(0,0){\makebox(0,0)[cc]{\small $\infty_{+}$}}
\end{picture}}
\put(60,0){
\begin{picture}(0,60)(0,0)
\thicklines
\put(-10,20){\circle*{3}}
\put(10,20){\circle*{3}}
\qbezier(-10,20)(-5,23)(0,20)
\qbezier(-10,20)(-5,17)(-1,19)
\qbezier(0,20)(5,17)(10,20)
\qbezier(1,21)(5,23)(10,20)
\qbezier(-10,20)(-5,33)(0,30)
\qbezier(-10,20)(-5,27)(-1,29)
\qbezier(0,30)(5,27)(10,20)
\qbezier(1,31)(5,33)(10,20)
\qbezier(-10,20)(-5,13)(0,10)
\qbezier(-10,20)(-5,7)(-1,9)
\qbezier(0,10)(5,7)(10,20)
\qbezier(1,11)(5,13)(10,20)
\put(0,0){\makebox(0,0)[cc]{\small $\Theta_{3} (\infty_{+})$}}
\end{picture}}
\end{picture}
\caption{Diagram $\infty_{+}$, graph $\Theta_{n}$ and diagran $\Theta_{n} (\infty_{+})$ for $n=3$.} \label{fig6}
\end{figure}
The Yamada polynomial of the diagram $\Theta_{s}(\infty_{+})$ presented in Figure~\ref{fig5} is as follows:
$$
R[\Theta_{n}(\infty_{+})]=\frac{(-\sigma)^s+\sigma\big((\sigma+1)A^{-2}+1\big)^s}{1+\sigma}.
$$

Similarly, the Yamada polynomial of $B_{q}(\infty_{+})$ is as follows:
$$
R[B_{q}(\infty_{+})]=(-1)^{q-1} \sigma^q.
$$
\end{example}

\section{Zeros of Yamada polynomial}

In this section we will discuss zeros of the Yamada polynomial.  To study zeros of families of polynomials we will base on the following  results by S.~Beraha, J.~Kahane, N.J.~Weiss \cite{BJ} and by A.D.~Sokal \cite{SO}.

\smallskip

\begin{lemma}\label{tbj} \cite{BJ}
If $\{f_n(x)\}$ is a family of polynomials such that
$$
f_n(x) = \alpha_1(x) \lambda_1(x)^n + \alpha_2(x) \lambda_2(x)^n +\ldots + \alpha_l(x) \lambda_l(x)^n,
$$
where the $\alpha_i(x)$ and $\lambda_i(x)$ are fixed non-zero polynomials, such that no pair $i\neq j$ has
$\lambda_i(x) \equiv \omega \lambda_j(x)$ for some complex number $\omega$ of unit modulus. Then $z$ is a limit of zeros of  $\{f_n(x)\}$ if and only if
\begin{itemize}
\item[(1)] two or more of the $\lambda_i(z)$ are of equal modulus, and strictly greater in modulus than the others; or
\item[(2)] for some j, the modulus of $\lambda_j(z)$ is strictly greater than those of the others, and $\alpha_i(z)=0$.
\end{itemize}
\end{lemma}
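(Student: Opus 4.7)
The plan is to prove the two directions of the lemma separately. Both the necessity direction and case~(2) of the sufficiency direction reduce to uniform asymptotics on a small disk around $z$, combined with an application of Hurwitz's theorem. Case~(1) of the sufficiency direction is the main difficulty, since the dominant terms $\lambda_i(x)^n$ oscillate in $n$ and have no straightforward limit.

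For necessity, suppose that at $z$ neither~(1) nor~(2) holds, so there is a unique $j$ with $|\lambda_j(z)|$ strictly maximal and $\alpha_j(z)\neq 0$. By continuity I can select a closed disk $\overline D$ about $z$ and constants $\delta,m>0$ so that $|\lambda_j(x)|\geq(1+\delta)|\lambda_i(x)|$ for $i\neq j$ and $|\alpha_j(x)|\geq m$ on $\overline D$. Factoring out $\lambda_j(x)^n$ rewrites $f_n$ as $\lambda_j(x)^n[\alpha_j(x)+o(1)]$ uniformly on $\overline D$, so $f_n$ has no zeros in $\overline D$ for large $n$, and $z$ is not a limit of zeros. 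Case~(2) of sufficiency is handled by the same rescaling: if $\lambda_j$ is strictly dominant but $\alpha_j(z)=0$, then $g_n(x):=f_n(x)/\lambda_j(x)^n$ converges uniformly on a small disk about $z$ to the nonzero polynomial $\alpha_j$, which vanishes at $z$; Hurwitz's theorem produces zeros of $g_n$, hence of $f_n$, accumulating at $z$.

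Case~(1) of sufficiency is the main obstacle. Suppose $|\lambda_1(z)|=\cdots=|\lambda_k(z)|=M$ with $k\geq 2$, strictly exceeding the other $|\lambda_i(z)|$. I would treat this by a rescaling $x=z+\zeta/n$ with $\zeta$ in a fixed compact region. Setting $\phi_i:=\lambda_i/\lambda_1$ and $c_i:=\phi_i'(z)/\phi_i(z)$, a Taylor expansion gives $\phi_i(z+\zeta/n)^n\to e^{c_i\zeta}$ uniformly on compact subsets. Along a subsequence for which the unit-modulus numbers $\phi_i(z)^n$ converge to constants $\omega_i$, the rescaled functions
\[ F_n(\zeta):=\frac{f_n(z+\zeta/n)}{\lambda_1(z+\zeta/n)^n} \]
converge on compact subsets to the exponential polynomial
\[ G(\zeta)=\alpha_1(z)+\sum_{i=2}^{k}\alpha_i(z)\,\omega_i\,e^{c_i\zeta}. \]
The hypothesis that no pair $\lambda_i,\lambda_j$ satisfies $\lambda_i\equiv\omega\lambda_j$ with $|\omega|=1$ prevents each $\phi_i$ from being a unit-modulus constant, so $G$ contains at least one nontrivial exponential term and is nonconstant. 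Picard's theorem then forces $G$ to have a zero $\zeta_0\in\mathbb C$, and a final application of Hurwitz's theorem in $\zeta$-coordinates yields zeros $\zeta_n\to\zeta_0$ of $F_n$; the corresponding points $z+\zeta_n/n\to z$ are zeros of $f_n$. The care required lies in the simultaneous compactness extraction of the limits $\omega_i$ on the torus $(S^1)^{k-1}$, in handling the degenerate situation where some $c_i=0$ (which is resolved by carrying the Taylor expansion to higher order and matching powers of $n$ in the rescaling $\zeta/n^{1/r}$), and in verifying that the exponentially smaller contributions from indices $i>k$ vanish uniformly under the chosen rescaling.
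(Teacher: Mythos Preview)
The paper does not prove this lemma at all: it is quoted verbatim from Beraha--Kahane--Weiss \cite{BJ} and used as a black box in the proof of Theorem~\ref{tm4}. So there is no ``paper's own proof'' to compare against; your proposal is an independent attempt at a result the authors simply cite.

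That said, your sketch is largely along the right lines but contains a genuine gap in case~(1) of sufficiency. You write that ``Picard's theorem then forces $G$ to have a zero $\zeta_0\in\mathbb C$.'' Picard's little theorem only says a nonconstant entire function omits at most one value; it does \emph{not} guarantee that the omitted value is different from $0$. The single exponential $e^{c\zeta}$ is already a counterexample. What you actually need is the classical fact that an exponential sum $\sum_{j} a_j e^{c_j\zeta}$ with at least two distinct exponents $c_j$ and nonzero coefficients $a_j$ has infinitely many zeros; this is not a corollary of Picard and requires a separate argument (for instance via growth indicators or a factorization into terms of the form $a+be^{c\zeta}$). You also silently assume that the dominant coefficients $\alpha_i(z)$ are nonzero for $2\le i\le k$, which need not hold; if some vanish, they drop out of $G$ and you must check that at least two exponential terms with distinct exponents survive. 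Finally, even granting your higher-order rescaling for the degenerate case $c_i=0$, you should verify that the non-degeneracy hypothesis $\lambda_i\not\equiv\omega\lambda_j$ actually forces the \emph{exponents} appearing in the limit $G$ to be genuinely distinct, not merely that $G$ is nonconstant. These points are exactly where the work in \cite{BJ} lies.
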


\smallskip

\begin{lemma} \cite{SO} \label{lso}
Let $F_{1}$, $F_{2}$, and $G$ be analytic functions on a disc $\{ z \in \mathbb C : |z|<R \}$ such that $|G(0)|\leq 1$ and G not constant. Then, for each $\epsilon >0$ there exists $s_{0}<\infty$ such that for all integers $s\geq s_{0}$ the equation
$$
|1+F_{1}(z)G(z)^s|=|1+F_{2}(z)G(z)^s|
$$
has a solution in the disc $|z|<\epsilon$.
\end{lemma}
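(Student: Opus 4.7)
Assume first that $F_{1}\not\equiv F_{2}$, since otherwise the equation holds at every $z$ and there is nothing to prove. Consider the real-analytic function
\begin{equation*}
\Phi_{s}(z) \;:=\; |1+F_{1}(z)G(z)^{s}|^{2}-|1+F_{2}(z)G(z)^{s}|^{2} \;=\; 2\,\mathrm{Re}\bigl((F_{1}-F_{2})(z)G(z)^{s}\bigr)+\bigl(|F_{1}(z)|^{2}-|F_{2}(z)|^{2}\bigr)|G(z)|^{2s}.
\end{equation*}
A solution of the desired equation is precisely a zero of $\Phi_{s}$, so by continuity it suffices to show that for all sufficiently large $s$, $\Phi_{s}$ takes values of both signs on some closed curve inside $\{|z|<\epsilon\}$.

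The preparatory step is to choose a base point $z^{*}\in\{|z|<\epsilon/2\}$ with the two properties (a) $F_{1}(z^{*})\neq F_{2}(z^{*})$, and (b) $0<|G(z^{*})|<1$. Property (a) is satisfied off the discrete zero set of the non-zero analytic function $F_{1}-F_{2}$. For (b), when $|G(0)|<1$ continuity (together with discreteness of the zeros of $G$) provides such $z^{*}$ arbitrarily close to $0$; when $|G(0)|=1$ we invoke the open mapping theorem applied to the non-constant analytic $G$, whose image of any neighbourhood of $0$ is an open set about $G(0)$ and therefore meets the open unit disc. Near $z^{*}$ we expand $(F_{1}-F_{2})(z)=c_{*}+O(z-z^{*})$ with $c_{*}\neq 0$, and $G(z)=G(z^{*})\exp\bigl(\tilde g(z-z^{*})\bigr)$, where $\tilde g$ is analytic at $0$, $\tilde g(0)=0$, and has leading expansion $\tilde g(w)=\tilde\eta\,w^{\tilde j}+O(w^{\tilde j+1})$ for some $\tilde\eta\neq 0$ and integer $\tilde j\ge 1$ (whose existence is ensured because $G$ is non-constant).

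Fix a constant $C>\pi$ and, for large $s$, consider the circle $\gamma_{s}\colon z=z^{*}+r_{s}e^{i\theta}$ of radius $r_{s}:=\bigl(C/(s|\tilde\eta|)\bigr)^{1/\tilde j}$. Then $r_{s}\to 0$, so $\gamma_{s}\subset\{|z|<\epsilon\}$ for $s$ large, and on $\gamma_{s}$ one computes $s\tilde g(z-z^{*})=C\,e^{i(\arg\tilde\eta+\tilde j\theta)}+o(1)$, whence
\begin{equation*}
G(z)^{s}\;=\;G(z^{*})^{s}\,\exp\!\bigl(C\,e^{i(\arg\tilde\eta+\tilde j\theta)}\bigr)(1+o(1)).
\end{equation*}
Thus $|G(z)^{s}|$ lies in $[\,|G(z^{*})|^{s}e^{-C},\,|G(z^{*})|^{s}e^{C}\,]$ up to $o(1)$, while $\arg G(z)^{s}$ sweeps through an interval of length $\approx 2C>2\pi$ as $\theta$ runs over $[0,2\pi]$. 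Consequently the leading term $2\,\mathrm{Re}\bigl((F_{1}-F_{2})G^{s}\bigr)$ decomposes on $\gamma_{s}$ as a positive amplitude comparable to $|G(z^{*})|^{s}$ times $\cos\bigl(\arg c_{*}+s\arg G(z^{*})+C\sin(\arg\tilde\eta+\tilde j\theta)+o(1)\bigr)$, and the cosine changes sign because its argument varies by more than $2\pi$. The remaining term of $\Phi_{s}$ is $O(|G(z^{*})|^{2s})=o(|G(z^{*})|^{s})$, since $|G(z^{*})|<1$, and is therefore dominated by the leading one for $s$ large. Hence $\Phi_{s}$ changes sign on $\gamma_{s}$, and continuity gives a zero of $\Phi_{s}$ on $\gamma_{s}\subset\{|z|<\epsilon\}$, completing the proof.

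The main obstacle is the case $|G(0)|=1$, where $|G|^{s}$ neither decays nor grows at $0$ and the second term of $\Phi_{s}$ is not \emph{a priori} negligible compared to the first. It is overcome precisely by the open-mapping shift to a point $z^{*}$ with $|G(z^{*})|<1$: after the shift, both the rapid $\theta$-oscillation of $\arg G(z)^{s}$ on the shrinking circle $\gamma_{s}$ and the decay $|G(z^{*})|^{s}\to 0$ cooperate to force the sign change.
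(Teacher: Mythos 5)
Your proof is correct. The paper itself gives no proof of this lemma --- it is quoted verbatim from Sokal \cite{SO} --- and your argument is essentially a reconstruction of Sokal's original one: shift to a nearby base point $z^{*}$ with $0<|G(z^{*})|<1$ and $F_{1}(z^{*})\neq F_{2}(z^{*})$ (using the open mapping theorem when $|G(0)|=1$), then force a sign change of $|1+F_{1}G^{s}|^{2}-|1+F_{2}G^{s}|^{2}$ on a circle of radius $\bigl(C/(s|\tilde\eta|)\bigr)^{1/\tilde j}$ by making $\arg G(z)^{s}$ wind through more than $2\pi$ while the quadratic term $O(|G(z^{*})|^{2s})$ is dominated by the linear term of order $|G(z^{*})|^{s}$.
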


\smallskip

The study of zeros of Yamada polynomial was started in~\cite{paper1}.

\smallskip

\begin{theorem}\label{tmain3} \cite{paper1}
Zeros of the Yamada polynomial of spatial graphs are dense in the following region:
$$
\Omega =
\big\{ z \in \mathbb C \, : \, |z+1+z^{-1}| \geq \min \{1, |z^{3}+2z^{2}+z+1|, |1 + z^{-1} + 2 z^{-2} + z^{-3} | \} \big\}.
$$
\end{theorem}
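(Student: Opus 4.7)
The plan is to produce three two-parameter families of spatial graph diagrams of the form $C_{n}(\Theta_{s}(g))$, with $g$ one of three small 2-terminal diagrams, compute their Yamada polynomials using \corref{ccn} twice, and then apply \lemref{tbj} together with \lemref{lso} to obtain density in the three half-planes $\{|\sigma|\geq|Q_{*}|\}$ with $Q_{*}\in\{1,P,Q\}$, whose union is exactly $\Omega$. Here $\sigma=A+1+A^{-1}$, $P=A^{3}+2A^{2}+A+1=(\sigma+1)A^{2}+1$, and $Q=1+A^{-1}+2A^{-2}+A^{-3}=(\sigma+1)A^{-2}+1$. The three choices of $g$ are (a) the trivial single-edge diagram ($R[g]=0$, $R[g']=\sigma$); (b) the diagram $g=\infty_{+}$ ($R[g]=A^{-2}\sigma$, $R[g']=\sigma$); and (c) its mirror $g=\infty_{-}$ ($R[g]=A^{2}\sigma$, $R[g']=\sigma$), using the values computed in Example~\ref{E4}.

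First I would apply the theta and bouquet formulae of \corref{ccn} to the inner piece, giving
$$R[\Theta_{s}(g)]=\frac{(-\sigma)^{s}+\sigma Q_{*}^{s}}{1+\sigma}, \qquad R[\Theta_{s}(g)']=R[B_{s}(g)]=-(-\sigma)^{s},$$
where $Q_{*}=\tfrac{\sigma+1}{\sigma}R[g]+\tfrac{1}{\sigma}R[g']$ evaluates respectively to $1$, $Q$, $P$ in cases (a), (b), (c), while the prime Yamada polynomial collapses uniformly to $-(-\sigma)^{s}$ thanks to the common value $R[g']=\sigma$. Plugging these into the cycle formula of \corref{ccn}, the factors of $(1+\sigma)^{-1}$ combine cleanly and, after cancellation, one arrives at the uniform expression
$$R[C_{n}(\Theta_{s}(g))] = \frac{(-1)^{n}\bigl((-\sigma)^{s}+\sigma Q_{*}^{s}\bigr)^{n} + \sigma\bigl(Q_{*}^{s}-(-\sigma)^{s}\bigr)^{n}}{(1+\sigma)^{n}}.$$
This numerator is in Beraha-Kahane-Weiss shape $\alpha_{1}\lambda_{1}^{n}+\alpha_{2}\lambda_{2}^{n}$ with $\lambda_{1}=-((-\sigma)^{s}+\sigma Q_{*}^{s})$, $\lambda_{2}=Q_{*}^{s}-(-\sigma)^{s}$; its zero equation $(\lambda_{1}/\lambda_{2})^{n}=-\sigma$ forces $|\lambda_{1}/\lambda_{2}|=|\sigma|^{1/n}\to 1$, and dividing the resulting curve $|\lambda_{1}|=|\lambda_{2}|$ through by $|(-\sigma)^{s}|$ casts the accumulation condition in the uniform Sokal shape
$$|1+\sigma G(z)^{s}|=|1-G(z)^{s}|, \qquad G=-Q_{*}/\sigma.$$

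To finish, I would pick any $z_{0}\in\Omega$ and any $\epsilon>0$. By definition of $\Omega$, at least one of $|\sigma(z_{0})|\geq 1$, $|\sigma(z_{0})|\geq |Q(z_{0})|$, $|\sigma(z_{0})|\geq |P(z_{0})|$ holds, so there is a choice of $g$ for which $|G(z_{0})|\leq 1$. Invoking \lemref{lso} with $F_{1}=\sigma$, $F_{2}=-1$, and the corresponding $G$, there is an $s_{0}$ such that for every $s\geq s_{0}$ the modulus equation above has a solution $z_{*}$ within $\epsilon/2$ of $z_{0}$; then \lemref{tbj}, applied to the one-parameter family $\{R[C_{n}(\Theta_{s}(g))]\}_{n\geq 1}$ with this $s$ fixed, produces a sequence of genuine zeros accumulating at $z_{*}$, so some $R[C_{n}(\Theta_{s}(g))]$ vanishes within $\epsilon$ of $z_{0}$. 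The main obstacle I expect is verifying the non-degeneracy hypothesis of \lemref{tbj} --- that $\lambda_{1}(z)\not\equiv\omega\lambda_{2}(z)$ for any unit-modulus complex $\omega$, checked separately for each $Q_{*}\in\{1,P,Q\}$ --- together with ensuring that $G$ is analytic and non-constant on the chosen disc (i.e.\ that $z_{0}$ avoids the finitely many zeros and poles of $\sigma$, $P$, and $Q$); both conditions hold on an open dense subset of $\Omega$, and the remaining exceptional points can be handled by passing to limits of interior zeros.
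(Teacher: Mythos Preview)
Your proposal is correct and matches the intended strategy. Note that the present paper does not itself prove \thmref{tmain3}; it is quoted from \cite{paper1}. However, your argument is precisely the template that \cite{paper1} uses, and that this paper reuses (with $g=\infty_{+}^{k}$) to prove the stronger \thmref{tm4}: apply \corref{ccn} twice to obtain a two-term Beraha--Kahane--Weiss expression for $R[C_{n}(\Theta_{s}(g))]$, reduce the accumulation locus to a Sokal-type modulus equation $|1+\sigma G^{s}|=|1-G^{s}|$ with $G=-Q_{*}/\sigma$, and invoke \lemref{lso}. Your identification of the three building blocks $g\in\{\text{edge},\ \infty_{+},\ \infty_{-}\}$ with $Q_{*}\in\{1,\ (\sigma+1)A^{-2}+1,\ (\sigma+1)A^{2}+1\}$ is exactly right and recovers the three pieces of the $\min$ defining $\Omega$; the computations $R[\Theta_{s}(g)]=\frac{(-\sigma)^{s}+\sigma Q_{*}^{s}}{1+\sigma}$ and $R[(\Theta_{s}(g))']=-(-\sigma)^{s}$ also agree with Example~2.8 and Eq.~(\ref{e33}) of the paper. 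The only technicalities you flag (non-degeneracy in \lemref{tbj}, analyticity/non-constancy of $G$ near $z_{0}$, and the finitely many exceptional points where $\sigma=0$ or $1+\sigma=0$) are genuine but routine, handled just as you indicate by checking leading terms and passing to limits.
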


\smallskip

So, a natural question arises: whether zeros of Yamada polynomial are dense in the whole complex plane or not? The affirmative answer can be given now.

\smallskip

\begin{theorem}\label{tm4}
Zeros of the Yamada polynomial of spatial graphs are dense in the complex plane.
\end{theorem}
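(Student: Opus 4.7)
The strategy extends \thmref{tmain3} by combining \lemref{tbj} (Beraha--Kahane--Weiss) with \lemref{lso} (Sokal), applied to a flexible two-parameter family of spatial graphs.

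\textbf{The family.} Consider $\{C_n(\Theta_s(T))\}_{n,s\ge 1}$, where $T$ is a tangle (a diagram with two attached vertices) to be selected adaptively. Applying \corref{ccn} first to compute $R[\Theta_s(T)]$ and $R[\Theta_s(T)']=R[B_s(T)]$, and then applying \corref{ccn} a second time with $g=\Theta_s(T)$, one obtains after routine algebra
$$
R[C_n(\Theta_s(T))] \;=\; \frac{\mu^{sn}}{(1+\sigma)^n}\Bigl[(-1)^n(1+\sigma\,G_T^{\,s})^n + \sigma\,(G_T^{\,s}-1)^n\Bigr],
$$
where $\mu := -R[T']$ and $G_T(A) := -\dfrac{(\sigma+1)\rho_T(A)+1}{\sigma}$ with $\rho_T := R[T]/R[T']$. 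For odd $n$ zeros satisfy $\bigl((1+\sigma G_T^{\,s})/(G_T^{\,s}-1)\bigr)^n=\sigma$; by \lemref{tbj}, as $n\to\infty$ they accumulate on the locus $|1+\sigma\,G_T^{\,s}| = |1-G_T^{\,s}|$, which is exactly the equation of \lemref{lso} with $F_1=\sigma$ and $F_2=-1$. At each point $A_0$ where $G_T$ is analytic, non-constant, and satisfies $|G_T(A_0)|\le 1$, \lemref{lso} then provides, for each $\epsilon>0$, an $s_0$ such that for all $s\ge s_0$ this level set meets the disk $|A-A_0|<\epsilon$; taking $n$ large enough yields actual zeros of $R[C_n(\Theta_s(T))]$ inside that disk.

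\textbf{Covering $\mathbb C$.} It remains to exhibit, for each $A_0\in\mathbb C$, some tangle $T$ with $|G_T(A_0)|\le 1$ and $G_T$ non-constant. The region $\Omega$ of \thmref{tmain3} is covered by three elementary tangles: $T=\infty_+$, with $\rho_T=A^{-2}$ (cf.\ Example~\ref{E4}), giving $\{|\sigma|\ge|1+A^{-1}+2A^{-2}+A^{-3}|\}$; the mirror $T=\infty_-$, giving the companion region; and $T=\Theta_2$, the two parallel arcs tangle with $R[T]=\sigma$ and $R[T']=-\sigma^2$ by \lemref{lemma-dv}, whence $\rho_T=-1/\sigma$ and $G_T=1/\sigma^2$, covering $\{|\sigma|\ge 1\}$. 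For the bounded residual region $\mathbb C\setminus\Omega$ near the two roots $e^{\pm 2\pi i/3}$ of $\sigma$, a new tangle is required. The naive disconnected choice $T_0=C_m\sqcup C_{m'}$ with $u,v$ in distinct components gives $R[T_0]=\sigma^2$ and $R[T_0']=-\sigma^2$ by \propref{pr}(1)--(2), so $\rho_{T_0}\equiv -1$ and $G_{T_0}\equiv 1$, which is inadmissible to \lemref{lso} because $G_{T_0}$ is constant. One then perturbs $T_0$ by a small non-trivial modification -- for instance adding one linking crossing between the two cycles, whose resulting $R$ can be computed via the skein relations of \propref{pv} and the one-point and disjoint-union rules of \propref{pr} -- producing a tangle $T$ whose $\rho_T$ is no longer identically $-1$ but still attains the value $-1$ at a prescribed root of $\sigma$. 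At such a point $|G_T(A_0)|=1$ while $G_T$ is non-constant, and \lemref{lso} applies.

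\textbf{Main obstacle.} The delicate step is the final one: exhibiting an explicit perturbation of $C_m\sqcup C_{m'}$ and verifying algebraically that the resulting $G_T$ is bounded by $1$ on a subset dense in $\mathbb C\setminus\Omega$, not merely at isolated points, while remaining non-constant analytic. This requires an explicit computation of $R[T]$ and $R[T']$ through repeated application of \propref{pv} and \propref{pr}, followed by a careful analysis of the sub-level set $\{|G_T|\le 1\}$ near the two roots of $\sigma$. Once this is carried out, the BKW+Sokal mechanism of the first paragraphs supplies density of zeros in every neighborhood of every point of $\mathbb C$.
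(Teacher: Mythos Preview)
Your setup is correct and matches the paper: the family $C_n(\Theta_s(T))$, the formula
\[
R[C_n(\Theta_s(T))]=\frac{\mu^{sn}}{(1+\sigma)^n}\bigl[(-1)^n(1+\sigma G_T^{\,s})^n+\sigma(G_T^{\,s}-1)^n\bigr],
\]
and the reduction via \lemref{tbj} to the Sokal-type locus $|1+\sigma G_T^{\,s}|=|1-G_T^{\,s}|$ are exactly what the paper uses. The divergence is in how to force $|G_T(A_0)|\le 1$ at an arbitrary point $A_0$.

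Your proposal to handle $\mathbb C\setminus\Omega$ by perturbing the disconnected tangle $C_m\sqcup C_{m'}$ is a genuine gap, and you acknowledge as much. The issue is not merely that the computation is left undone: you need the sublevel set $\{|G_T|\le 1\}$ to be \emph{dense} (not just nonempty) in a neighborhood of the two primitive sixth roots of unity, and a single perturbed tangle will typically give only a curve or a region with a fixed boundary. There is no mechanism in your sketch to shrink that boundary arbitrarily close to a prescribed $A_0$.

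The paper's device, which you are missing, is to take $T=\infty_+^k$ as a \emph{third parameter} rather than a fixed tangle. One computes $R[\infty_+^k]=A^{-2k}\sigma$ and $R[(\infty_+^k)']=(-1)^{k-1}A^{-k}\sigma(A+A^{-1})-A^{-2k}\sigma$, which yields
\[
G_{\infty_+^k}(A)=\frac{M_k+\sigma A^{-2k}}{\sigma(M_k-A^{-2k})},\qquad M_k=(-1)^{k-1}A^{-k}(A+A^{-1}).
\]
The crucial observation is that the equation $|G_{\infty_+^k}(A)|=1$ itself reduces to
\[
\Bigl|1+(-1)^{k-1}\tfrac{A+A^{-1}}{A+1+A^{-1}}A^{k}\Bigr|=\Bigl|1+(-1)^{k}(A+A^{-1})A^{k}\Bigr|,
\]
which is again of the form in \lemref{lso}, now with the base function $A\mapsto A$ and exponent $k$. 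Hence for any $A_0$ with $|A_0|\le 1$ and any $\varepsilon>0$, a second application of Sokal's lemma (in $k$) produces $A_1$ with $|A_1-A_0|<\varepsilon$ and $|G_{\infty_+^{k}}(A_1)|=1$; the first application (in $s$) then gives zeros accumulating near $A_1$. Mirror symmetry $R[\widehat g](A)=R[g](A^{-1})$ with $\infty_-^k$ covers $|A_0|\ge 1$. This iterated Sokal argument is the missing idea; it replaces your ad hoc search for a tangle by a uniform mechanism that works at every point of $\mathbb C$.
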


\smallskip

The proof of Theorem~\ref{tm4} is constructive: we will describe explicitly the family of spatial graphs with required property.  The construction will be give in few steps. Firstly, let us denote by $\infty_+^k$ (and $\infty_{-}^{k}$, respectively) diagrams with two vertices and $k$ positive (respectively, negative) double points as shown in Figure~\ref{fig7}.
\begin{figure}[!ht]
\centering
\unitlength=0.7mm
\begin{picture}(0,30)
\put(-60,0){
\begin{picture}(0,30)(-5,0)
\thicklines
\put(-40,10){\circle*{3}}
\put(30,10){\circle*{3}}
\qbezier(-40,10)(-35,20)(-30,10)
\qbezier(-40,10)(-35,0)(-31,8)
\qbezier(-30,10)(-25,0)(-21,8)
\qbezier(-29,12)(-25,20)(-20,10)
\qbezier(-20,10)(-17,5)(-15,5)
\qbezier(-19,12)(-17,15)(-15,15)
\qbezier(30,10)(25,20)(21,12)
\qbezier(30,10)(25,0)(20,10)
\qbezier(15,15)(17,15)(20,10)
\qbezier(15,5)(17,5)(19,8)
\put(-30,20){\makebox(0,0)[cc]{\small $1$}}
\put(-20,20){\makebox(0,0)[cc]{\small $2$}}
\put(20,20){\makebox(0,0)[cc]{\small $k$}}
\put(0,10){\makebox(0,0)[cc]{$\cdots$}}
\end{picture}}
\put(60,0){
\begin{picture}(0,30)(-5,0)
\thicklines
\put(-40,10){\circle*{3}}
\put(30,10){\circle*{3}}
\qbezier(-40,10)(-35,0)(-30,10)
\qbezier(-40,10)(-35,20)(-31,12)
\qbezier(-30,10)(-25,20)(-21,12)
\qbezier(-29,8)(-25,0)(-20,10)
\qbezier(-20,10)(-17,15)(-15,15)
\qbezier(-19,8)(-17,5)(-15,5)
\qbezier(30,10)(25,20)(20,10)
\qbezier(30,10)(25,0)(21,8)
\qbezier(15,5)(17,5)(20,10)
\qbezier(15,15)(17,15)(19,12)
\put(-30,20){\makebox(0,0)[cc]{\small $1$}}
\put(-20,20){\makebox(0,0)[cc]{\small $2$}}
\put(20,20){\makebox(0,0)[cc]{\small $k$}}
\put(0,10){\makebox(0,0)[cc]{$\cdots$}} 
\end{picture}
}
\end{picture}
\caption{The diagram $\infty_{+}^{k}$ (on the left) and $\infty_{-}^{k}$ (on the right).} \label{fig7}
\end{figure}
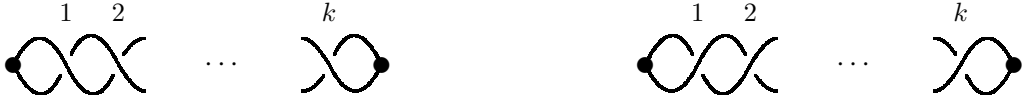

For these diagrams we easy get
\begin{lemma} The following properties hold:
\begin{equation}\label{ek}
R[\infty_+^k]=A^{-2k}\sigma,
\end{equation}
and
\begin{equation}\label{ek'}
R[(\infty_+^k)']=(-1)^{k-1}A^{-k}\sigma (A + A^{-1}) -A^{-2k}\sigma.
\end{equation}
\end{lemma}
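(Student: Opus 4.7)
The plan is to prove both identities by joint induction on $k$, applying the skein relation of Proposition~\ref{pv}(1) at a single crossing. Set $a_k := R[\infty_+^k]$, $b_k := R[(\infty_+^k)']$, and let $T_k^+$ denote the auxiliary one-vertex diagram obtained from $\infty_+^k$ by replacing the vertex $v$ with a cap (so the two strands meet in a single loop at $u$ carrying the $k$ positive crossings); write $t_k := R[T_k^+]$. The base cases $a_0 = \sigma$, $t_0 = \sigma$, and $b_0 = -\sigma^2$ follow from Lemma~\ref{lemma-dv} since $\infty_+^0 = C_2$, $T_0^+ = B_1$, and $(\infty_+^0)' = B_2$.

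The inductive steps for $a_{k+1}$ and $t_{k+1}$ run in parallel. Smoothing the rightmost crossing of $\infty_+^{k+1}$ yields three states: $S_+$ gives $\infty_+^k$; $S_-$ gives the disjoint union $T_k^+ \cup B_1$ (the right cap forms a loop at $v$ while the left cap closes the remaining braid at $u$), with polynomial $t_k\sigma$; and $S_0$ gives $\infty_+^k \cdot \Theta_2$ (one vertex shared), with polynomial $-a_k\sigma$ by Proposition~\ref{pr}(2). Smoothing the top crossing of $T_{k+1}^+$ analogously yields $T_k^+$, $T_k^+ \cup \bigcirc$, and $\infty_+^k \cdot B_1$. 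Substituting the inductive hypothesis $a_k = t_k = A^{-2k}\sigma$ and using $R[\Theta_2] = R[B_1] = R[\bigcirc] = \sigma$, both recursions collapse via the identity $A\sigma + \sigma^2(A^{-1}-1) = A^{-2}\sigma$, giving $a_{k+1} = t_{k+1} = A^{-2(k+1)}\sigma$ and establishing~(\ref{ek}).

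For $b_{k+1}$, smoothing a crossing of $(\infty_+^{k+1})'$ yields: the $S_+$ state $(\infty_+^k)'$; the $S_-$ state $T_k^+ \cdot B_1$ (two loop structures sharing the single vertex $u$), contributing $-A^{-2k-1}\sigma^2$ by Proposition~\ref{pr}(2); and the $S_0$ state the two-shared-vertex union $\Theta_2 : \infty_+^k$. Applying Proposition~\ref{cdtv} with $k_1 = \Theta_2' = B_2$ and $k_2 = (\infty_+^k)'$, the $S_0$ contribution simplifies to $b_k(1-\sigma) + a_k$. Collecting everything and using $A+1-\sigma = -A^{-1}$ gives the linear recursion
\[
b_{k+1} = -A^{-1} b_k - A^{-2k-1}\sigma(1 + A^{-1}),
\]
with $b_0 = -\sigma^2$. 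One verifies that $-A^{-2k}\sigma$ is a particular solution; the homogeneous part is $c(-A^{-1})^k$. Imposing $b_0 = -\sigma^2$ forces $c = -\sigma(A+A^{-1})$, yielding $b_k = (-1)^{k-1}A^{-k}\sigma(A+A^{-1}) - A^{-2k}\sigma$, precisely~(\ref{ek'}). The main obstacle is the careful geometric identification of the three smoothings -- especially recognizing the $S_-$ smoothing of $(\infty_+^{k+1})'$ as a vertex-shared (rather than disjoint) union -- after which all computations reduce to elementary manipulation of Laurent polynomials in $A$.
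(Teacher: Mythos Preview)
Your proof is correct and follows essentially the same strategy as the paper: apply the skein relation of Proposition~\ref{pv}(1) at one crossing to obtain a first-order linear recursion, then identify the closed form. Your auxiliary diagram $T_k^+$ coincides with the paper's $Q_k$, and your recursion for $b_k$ agrees with the paper's recursion for $R[W_k]$.

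There are two minor tactical differences worth noting. First, for the $S_0$ state in the computation of $b_{k+1}$, you recognise the resulting diagram as the two-vertex union $\Theta_2 : \infty_+^k$ and invoke Proposition~\ref{cdtv} directly; the paper instead names this diagram $X_{k-1}$ and reduces it by two applications of the deletion--contraction relation (Proposition~\ref{pv}(2)). Both routes yield the same contribution $b_k(1-\sigma)+a_k$, but your use of the two-vertex formula is arguably the cleaner shortcut. Second, you solve the recursion by splitting into homogeneous and particular parts starting from the base case $k=0$, whereas the paper starts at $k=1$ and verifies the closed form by substituting it into the recursion. These are equivalent finishes to the same argument.
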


\begin{proof}
Indeed, let us denote by $Q_{k}$ the diagram with one $2$-valence vertex and $k$ double points as presented in Figure~\ref{fig8} on left.
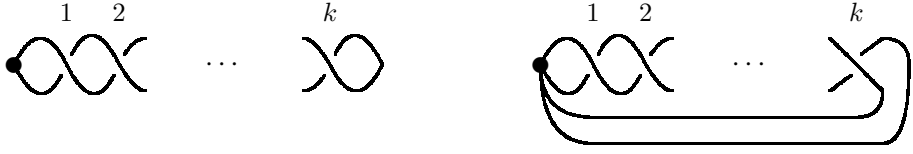
\begin{figure}[!ht]
\centering
\unitlength=0.7mm
\begin{picture}(0,35)(0,-10)
\put(-50,0){\begin{picture}(0,25)
\thicklines
\put(-40,10){\circle*{3}}
\qbezier(-40,10)(-35,20)(-30,10)
\qbezier(-40,10)(-35,0)(-31,8)
\qbezier(-30,10)(-25,0)(-21,8)
\qbezier(-29,12)(-25,20)(-20,10)
\qbezier(-20,10)(-17,5)(-15,5)
\qbezier(-19,12)(-17,15)(-15,15)
\qbezier(30,10)(25,20)(21,12)
\qbezier(30,10)(25,0)(20,10)
\qbezier(15,15)(17,15)(20,10)
\qbezier(15,5)(17,5)(19,8)
\put(-30,20){\makebox(0,0)[cc]{\small $1$}}
\put(-20,20){\makebox(0,0)[cc]{\small $2$}}
\put(20,20){\makebox(0,0)[cc]{\small $k$}}
\put(0,10){\makebox(0,0)[cc]{$\cdots$}}
\end{picture}}
\put(50,0){\begin{picture}(0,25)
\thicklines
\put(-40,10){\circle*{3}}
\qbezier(-40,10)(-35,20)(-30,10)
\qbezier(-40,10)(-35,0)(-31,8)
\qbezier(-30,10)(-25,0)(-21,8)
\qbezier(-29,12)(-25,20)(-20,10)
\qbezier(-20,10)(-17,5)(-15,5)
\qbezier(-19,12)(-17,15)(-15,15)
\qbezier(25,15)(25,15)(21,12)
\qbezier(25,5)(25,5)(20,10)
\qbezier(15,15)(15,15)(20,10)
\qbezier(15,5)(15,5)(19,8)
\put(-30,20){\makebox(0,0)[cc]{\small $1$}}
\put(-20,20){\makebox(0,0)[cc]{\small $2$}}
\put(20,20){\makebox(0,0)[cc]{\small $k$}}
\put(0,10){\makebox(0,0)[cc]{$\cdots$}}
\qbezier(-40,10)(-40,0)(-30,0)
\qbezier(-30,0)(-30,0)(20,0)
\qbezier(20,0)(25,0)(25,5)
\qbezier(-40,10)(-40,-5)(-30,-5)
\qbezier(-30,-5)(-30,-5)(25,-5)
\qbezier(25,-5)(30,-5)(30,10)
\qbezier(30,10)(30,15)(25,15)
\end{picture}}
\end{picture}
\caption{The diagrams $Q_{k}$ and $W_{k}$.} \label{fig8}
\end{figure}

To prove Eq.(\ref{ek}) we observe that  by items (2) and (3) of Proposition~\ref{pr} and item (1) of Proposition~\ref{pv} we get the following relations:
$$
R[Q_{k}] = A R [Q_{k-1}] + A^{-1} \sigma R [Q_{k-1}] - \sigma R[Q_{k-1}] = (A + A^{-1} \sigma - \sigma) R[Q_{k-1}]  = A^{-2} R[Q_{k-1}],
$$
hence $R[Q_{k}]= A^{-2k} \sigma$. By item (3) of Proposition~\ref{pv} we have $R[\infty_{+}^{0}] = R[Q_{0}] = \sigma$.  The induction assumption $R[\infty_{+}^{k-1}] = R[Q_{k-1}]$ implies Eq.(\ref{ek}). Indeed,
$$
R[\infty_{+}^{k}] = A R[\infty_{+}^{(k-1)}] + A^{-1} \sigma R[Q_{k-1}] - \sigma R[\infty_{+}^{{k-1}}] = A^{-2} R[\infty_{+}^{(k-1)}] = R[Q_{k}] = A^{-2k} \sigma.
$$

To demonstrate Eq.(\ref{ek'}) we will also use an induction on $k$. Let  us use  the shorter notation $W_{k} = (\infty_{+}^{k})'$, where $W_{k}$ is the diagram with one 4-valence vertex and $k$ double-points as pictured in Figure~\ref{fig8} on right side. Firstly we verify that Eq.(\ref{ek'}) holds for $k=1$. Indeed, by Example~\ref{E4}, $R[W_{1}] = R[(\infty_+^1)']=\sigma$ and the right side of Eq.(\ref{ek'}) is also equal to $\sigma$. It is easy to see that diagram $W_{k}$ satisfies the skein relation with diagrams obtained from $W_{k}$ by spins $+1$, $-1$, and $0$ at the double point numerated by $k$. For a reader convenience we present these diagrams for the case $k=3$ in Figure~\ref{fig9}. It is easy to see that in general we get $S_{+}(W_{k}) = W_{k-1}$, $S_{-} (W_{k})$ is a dot-product of $Q_{k}$ and the loop $B_{1}$, and $S_{0} (W_{k})$ is a 2-vertex diagram with $(k-1)$ double points which we denote by $X_{k-1}$.
\begin{figure}[!ht]
\centering
\unitlength=0.7mm
\begin{picture}(40,35)(0,-10)
\put(-50,0){\begin{picture}(0,25)
\thicklines
\put(-40,10){\circle*{3}}
\qbezier(-40,10)(-35,20)(-30,10)
\qbezier(-40,10)(-35,0)(-31,8)
\qbezier(-30,10)(-25,0)(-21,8)
\qbezier(-29,12)(-25,20)(-20,10)
\qbezier(-20,10)(-17,5)(-15,5)
\qbezier(-19,12)(-17,15)(-15,15)
\qbezier(-5,15)(-5,15)(-9,12)
\qbezier(-5,5)(-5,5)(-10,10)
\qbezier(-15,15)(-15,15)(-10,10)
\qbezier(-15,5)(-15,5)(-11,8)
\put(-30,20){\makebox(0,0)[cc]{\small $1$}}
\put(-20,20){\makebox(0,0)[cc]{\small $2$}}
\put(-10,20){\makebox(0,0)[cc]{\small $3$}}
\qbezier(-40,10)(-40,0)(-30,0)
\qbezier(-30,0)(-30,0)(-10,0)
\qbezier(-10,0)(-5,0)(-5,5)
\qbezier(-40,10)(-40,-5)(-30,-5)
\qbezier(-30,-5)(-30,-5)(-5,-5)
\qbezier(-5,-5)(0,-5)(0,10)
\qbezier(0,10)(0,15)(-5,15)
\end{picture}}
\put(10,0){\begin{picture}(0,25)
\thicklines
\put(-40,10){\circle*{3}}
\qbezier(-40,10)(-35,20)(-30,10)
\qbezier(-40,10)(-35,0)(-31,8)
\qbezier(-30,10)(-25,0)(-21,8)
\qbezier(-29,12)(-25,20)(-20,10)
\qbezier(-20,10)(-17,5)(-15,5)
\qbezier(-19,12)(-17,15)(-15,15)
\qbezier(-15,15)(-10,10)(-5,15)
\qbezier(-15,5)(-10,10)(-5,5)
\put(-30,20){\makebox(0,0)[cc]{\small $1$}}
\put(-20,20){\makebox(0,0)[cc]{\small $2$}}
\qbezier(-40,10)(-40,0)(-30,0)
\qbezier(-30,0)(-30,0)(-10,0)
\qbezier(-10,0)(-5,0)(-5,5)
\qbezier(-40,10)(-40,-5)(-30,-5)
\qbezier(-30,-5)(-30,-5)(-5,-5)
\qbezier(-5,-5)(0,-5)(0,10)
\qbezier(0,10)(0,15)(-5,15)
\end{picture}}
\put(70,0){\begin{picture}(0,25)
\thicklines
\put(-40,10){\circle*{3}}
\qbezier(-40,10)(-35,20)(-30,10)
\qbezier(-40,10)(-35,0)(-31,8)
\qbezier(-30,10)(-25,0)(-21,8)
\qbezier(-29,12)(-25,20)(-20,10)
\qbezier(-20,10)(-17,5)(-15,5)
\qbezier(-19,12)(-17,15)(-15,15)
\qbezier(-15,15)(-10,10)(-15,5)
\qbezier(-5,15)(-10,10)(-5,5)
\put(-30,20){\makebox(0,0)[cc]{\small $1$}}
\put(-20,20){\makebox(0,0)[cc]{\small $2$}}
\qbezier(-40,10)(-40,0)(-30,0)
\qbezier(-30,0)(-30,0)(-10,0)
\qbezier(-10,0)(-5,0)(-5,5)
\qbezier(-40,10)(-40,-5)(-30,-5)
\qbezier(-30,-5)(-30,-5)(-5,-5)
\qbezier(-5,-5)(0,-5)(0,10)
\qbezier(0,10)(0,15)(-5,15)
\end{picture}}
\put(130,0){\begin{picture}(0,25)
\thicklines
\put(-40,10){\circle*{3}}
\qbezier(-40,10)(-35,20)(-30,10)
\qbezier(-40,10)(-35,0)(-31,8)
\qbezier(-30,10)(-25,0)(-21,8)
\qbezier(-29,12)(-25,20)(-20,10)
\qbezier(-20,10)(-17,5)(-15,5)
\qbezier(-19,12)(-17,15)(-15,15)
\qbezier(-15,15)(-10,10)(-5,5)
\qbezier(-15,5)(-10,10)(-5,15)
\put(-10,10){\circle*{3}}
\put(-30,20){\makebox(0,0)[cc]{\small $1$}}
\put(-20,20){\makebox(0,0)[cc]{\small $2$}}
\qbezier(-40,10)(-40,0)(-30,0)
\qbezier(-30,0)(-30,0)(-10,0)
\qbezier(-10,0)(-5,0)(-5,5)
\qbezier(-40,10)(-40,-5)(-30,-5)
\qbezier(-30,-5)(-30,-5)(-5,-5)
\qbezier(-5,-5)(0,-5)(0,10)
\qbezier(0,10)(0,15)(-5,15)
\end{picture}}
\end{picture}
\caption{The diagrams $W_{3}$, $S_{+}(W_{3})=W_{2}$, $S_{-} (W_{3}) = B_{1} \cdot Q_{2}$ and $S_{0} (W_{3}) = X_{2}$.} \label{fig9}
\end{figure}
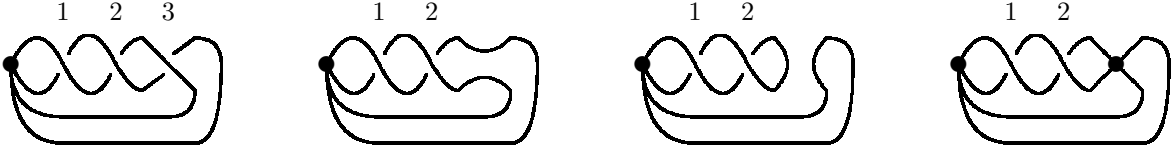

By writing this skein relation (see item (1) of Proposition~\ref{pv}) we get
$$
R[W_{k}] \, = \, A \, R[W_{k-1}] \, + \, A^{-1} R[B_{1} \cdot Q_{k-1}] \, + \, R[X_{k-1}].
$$

Using item (2) of Proposition~\ref{pr} and twice item (2) of Proposition~\ref{pv} we get
\begin{eqnarray*}
R[W_{k}] & = & A R [W_{k-1}] - A^{-1} \sigma R[Q_{k-1}] + (-\sigma) R[W_{k-1}] + R[W_{k-1}] + R[Q_{k-1}] \\
& = & (A - \sigma + 1) R[W_{k-1}] + (1 - A^{-1} \sigma) R[Q_{k}] \\
& = & - A^{-1} R[W_{k-1}] + (-A^{-1} - A^{-2}) A^{-2(k-1)} \sigma.
 \end{eqnarray*}

Assume that Eq.(\ref{ek'}) hold for $k -1$. Then we get
\begin{eqnarray*}
R [W_{k}] & = &  -A^{-1} \left( (-1)^{k-2} A^{-(k-1)} \sigma (A + A^{-1}) - A^{-2(k-1)} \sigma \right) + (-A^{-1} - A^{-2}) A^{-2(k-1)} \sigma \\
& = & (-1)^{k-1} A^{-k} \sigma (A + A^{-1}) + A^{-2k+1} \sigma - A^{-2k+1} \sigma - A^{-2k} \sigma \\
& = & (-1)^{k-1} A^{-k} \sigma (A + A^{-1}) - A^{-2k} \sigma.
\end{eqnarray*}
Hence Eq.(\ref{ek'}) holds for $k$ and the lemma is proven.
\end{proof}

\smallskip

\begin{proof}[Proof of Theorem~\ref{tm4}]
To prove Theorem~\ref{tm4} we will calculate the Yamada polynomial of diagram $C_{n}(\Theta_{s}(\infty_{+}^k))$ which is obtained when we replace every edge of the graph $C_{n}(\Theta_{s})$, having $n$ vertices and $(ns)$ edges as presented in Figure~\ref{fig10} on left, by diagram $\infty_{+}^{k}$ as presented in Figure~\ref{fig10} on right.
\begin{figure}[!ht]
\centering
\unitlength=0.8mm
\begin{picture}(0,40)(0,0)
\put(-60,0){
\begin{picture}(0,30)(0,10)
\thicklines
\put(-10,20){\circle*{3}}
\put(10,20){\circle*{3}}
\put(-10,40){\circle*{3}}
\put(10,40){\circle*{3}}
\qbezier(-10,20)(-10,30)(-10,40)
\qbezier(-10,20)(-14,30)(-10,40)
\qbezier(-10,20)(-6,30)(-10,40)
\qbezier(10,20)(10,20)(10,40)
\qbezier(10,20)(6,30)(10,40)
\qbezier(10,20)(14,30)(10,40)
\qbezier(-10,20)(0,20)(10,20)
\qbezier(-10,20)(0,16)(10,20)
\qbezier(-10,20)(0,24)(10,20)
\qbezier(-10,40)(0,40)(10,40)
\qbezier(-10,40)(0,36)(10,40)
\qbezier(-10,40)(0,44)(10,40)
\put(0,10){\makebox(0,0)[cc]{\small $C_{4} (\Theta_{3})$}}
\end{picture}}
\put(30,0){
\begin{picture}(0,30)(-5,-10)
\thicklines
\put(-40,10){\circle*{3}}
\put(30,10){\circle*{3}}
\qbezier(-40,10)(-35,20)(-30,10)
\qbezier(-40,10)(-35,0)(-31,8)
\qbezier(-30,10)(-25,0)(-21,8)
\qbezier(-29,12)(-25,20)(-20,10)
\qbezier(-20,10)(-17,5)(-15,5)
\qbezier(-19,12)(-17,15)(-15,15)
\qbezier(30,10)(25,20)(21,12)
\qbezier(30,10)(25,0)(20,10)
\qbezier(15,15)(17,15)(20,10)
\qbezier(15,5)(17,5)(19,8)
\put(-30,20){\makebox(0,0)[cc]{\small $1$}}
\put(-20,20){\makebox(0,0)[cc]{\small $2$}}
\put(20,20){\makebox(0,0)[cc]{\small $k$}}
\put(0,10){\makebox(0,0)[cc]{$\cdots$}}
\put(0,-10){\makebox(0,0)[cc]{\small $\infty_{+}^{k}$}}
\end{picture}}
\end{picture}
\caption{Graph $C_{4}(\Theta_{3})$ and diagram $\infty_{+}^{k}$.} \label{fig10}
\end{figure}
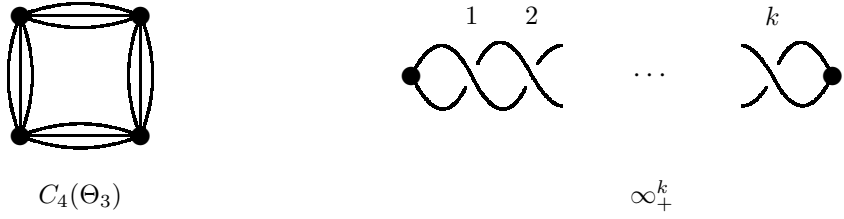

The Yamada polynomial of such a graph can be calculated by the formula given in Corollary~\ref{ccn}:
$$
R[C_{n}(\Theta_{s}(\infty_{+}^k))]= \Big( -R[\Theta_{s}(\infty_{+}^k)] \Big)^n + \sigma \Big( \frac{R[\Theta_{s}(\infty_{+}^k)]+R[(\Theta_{s}(\infty_{+}^k))']}{\sigma} \Big)^n.
$$

By item (1) of Lemma~\ref{tbj}, each complex number $A$ satisfying the equation
\begin{equation}\label{e31}
|-R[\Theta_{s}(\infty_{+}^k)]| = \Big| \frac{R[\Theta_{s}(\infty_{+}^k)] + R[(\Theta_{s}(\infty_{+}^k))']}{\sigma} \Big|,
\end{equation}
is a limit of zeros of Yamada polynomials  $\{ R[C_{n} (\Theta_{s}(\infty_{+}^k))] \}_{n=1,s=1,k=1}^{\infty, \infty, \infty}$.

To make shorter notations in Eq.(\ref{ek'}) we denote
$$
M_{k} = (-1)^{k-1} A^{-k} (A+ A^{-1}).
$$
Then Eq.(\ref{ek'}) can be rewritten as
$$
R[( \infty_{+}^{k})'] = M_{k} \sigma - A^{-2k} \sigma = \sigma (M_{k} - A^{-2k}).
$$
Combining Corollary~\ref{ccn}, Eq.(\ref{ek}) and Eq.(\ref{ek'}) we have
\begin{equation}\label{e32}
\begin{split}
R[\Theta_{s}(\infty_{+}^k)] = & \frac{(-1)^{s}}{1+\sigma} R[(\infty_{+}^k)']^{s} + \frac{\sigma}{1+\sigma} \left( \frac{\sigma+1}{\sigma}R[(\infty_{+}^k)]+\frac{1}{\sigma}R[(\infty_{+}^k)'] \right)^{s} \\
= & \frac{(-1)^{s}}{1+\sigma} \left( M_{k} \sigma  - A^{-2k}\sigma \right)^{s}  + \frac{\sigma}{1+\sigma} \left( \frac{\sigma+1}{\sigma} A^{-2k}\sigma  + \frac{M_{k} \sigma - A^{-2k} \sigma}{\sigma} \right)^{s}\\
= & \frac{(-1)^{s} \sigma^{s}}{1+\sigma} \left( M_{k} - A^{-2k} \right)^{s} + \frac{\sigma}{1+\sigma} \left( M_{k} + A^{-2k}\sigma \right)^{s},\\
\end{split}
\end{equation}
and since $(\Theta_{s})' = B_{s}$ we get
\begin{equation}\label{e33}
R[(\Theta_{s}(\infty_{+}^k))'] =  (-1)^{s-1} \left(R[(\infty_{+}^k)'] \right)^s  = (-1)^{s-1} \sigma^{s} \left( M_{k} - A^{-2k} \right)^{s}.
\end{equation}

Then Eq.(\ref{e31}) is equivalent to
\begin{equation*}
\begin{gathered}
\left| (-1)^{s} \frac{\sigma^{s}}{1 + \sigma} \left( M_{k} - A^{-2k} \right)^{s} +  \frac{\sigma}{1+\sigma} \left( M_{k} + A^{-2k}\sigma \right)^{s} \right| \\
= \left| \frac{1}{\sigma} \left[ (-1)^{s} \frac{\sigma^{s}}{1+\sigma}  \left( M_{k} - A^{-2k} \right)^{s} +  \frac{\sigma}{1+\sigma}  \left( M_{k} + A^{-2k} \sigma \right)^{s} + (-1)^{s-1} \sigma^{s}  \left( M_{k} - A^{-2k} \right)^{s}  \right] \right|,
\end{gathered}
\end{equation*}
hence
\begin{equation*}
\begin{gathered}
\left| (-1)^{s} \sigma^{s} \left( M_{k} - A^{-2k} \right)^{s} +  \sigma \left( M_{k }+ A^{-2k}\sigma \right)^{s} \right|
= \left| (-1)^{s-1} \sigma^{s}   \left( M_{k} - A^{-2k} \right)^{s} +  \left( M_{k} + A^{-2k} \sigma \right)^{s} \right|
\end{gathered}
\end{equation*}
and then
\begin{equation*}
\left| (-1)^{s} + \frac{\sigma \left( M_{k} + A^{-2k} \sigma \right)^{s}}{\sigma^{s} \left( M_{k} - A^{-2k} \right)^{s}} \right| =
\left| (-1)^{s-1} + \frac{\left( M_{k} + \sigma A^{-2k} \right)^{s}}{\sigma^{s} \left( M_{k} - A^{-2k} \right)^{s}} \right|.
\end{equation*}
Denoting
$$
G(A) = \frac{M_{k} + \sigma A^{-2k}}{\sigma (M_{k} - A^{-2k})}
$$
we get that Eq.(\ref{e31}) is equivalent to
$$
\left| (-1)^{s} + \sigma \left[ G(A) \right]^{s} \right| = \left| (-1)^{s-1} + \left[ G(A) \right]^{s} \right|
$$
and each root of this equation is a limit of zeros of polynomials  $\{ R[C_{n} (\Theta_{s}(\infty_{+}^k))] \}_{n=1,s=1,k=1}^{\infty, \infty, \infty}$. Observe that for $s$ even (respectively, odd) we have
\begin{equation} \label{e34}
\left| 1 + \sigma \left[ G(A) \right]^{s} \right| = \left| 1 - \left[ G(A) \right]^{s} \right| \quad \text{\rm or} \quad \left| 1 - \sigma \left[ G(A) \right]^{s} \right| = \left| 1 + \left[ G(A) \right]^{s} \right|,
\end{equation}
which are both of the form presented in Lemma~\ref{lso}. We will discuss one of them, and the other can be considered analogously. 

Let us fix a complex number $z_{0}$ such that $|z_{0}| \leq 1$,  and denote $a = A-z_{0}$. Recall that
$$
\sigma = A + 1 + A^{-1} = (a + z_{0}) + 1 + (a+z_{0})^{-1}
$$
and
$$
M_{k} = (-1)^{k-1} A^{-k} (A + A^{-1}) = (-1)^{k-1} (a+z_{0})^{-k} ((a+z_{0}) + (a+z_{0})^{-1}).
$$
Then left equation in Eq.(\ref{e34}) is equivalent to
\begin{equation}
\left| 1 + ((a + z_{0}) + 1 + (a+z_{0})^{-1}) \left[ G(a) \right]^{s} \right| = \left| 1 - \left[ G(a) \right]^{s} \right|
\end{equation}
where
$$
G(a) =  \frac{(-1)^{k-1} (a+z_{0})^{-k} \Big( (a+z_{0}) + (a+z_{0})^{-1} \Big) + \Big( (a + z_{0}) + 1 + (a+z_{0})^{-1} \Big)  (a+ z_{0})^{-2k}}{\Big( (a + z_{0}) + 1 + (a+z_{0})^{-1}\Big) \Big( (-1)^{k-1} (a+z_{0})^{-k} ((a+z_{0}) + (a+z_{0})^{-1}) - (a + z_{0})^{-2k}  \Big) }.
$$
and we have
\begin{equation} \label{e100}
|G(0)|  =  \Big| \frac{(-1)^{k-1} z_{0}^{-k}(z_{0} + z_{0}^{-1})+ (z_{0} + 1 + z_{0}^{-1} ) z_{0}^{-2k}}{(z_{0} + 1 + z_{0}^{-1}) (-1)^{k-1} z_{0}^{-k} (z_{0} + z_{0}^{-1}) + (z_{0} + 1 + z_{0}^{-1} ) z_{0}^{-2k}} \Big| .
\end{equation}
It will follows from Lemma~\ref{lso}, that for any $\varepsilon > 0$ there exists such $k_{0}$ that for any $k \geq k_{0}$ there exists a root of the equation Eq.(\ref{e34}) in the disk $|a| < \varepsilon$, and therefore, there exists a root $A$ of the Yamada polynomial  $R[C_{n}(\Theta_{s}(\infty_{+}^k))]$ in the $\varepsilon$-neighborhood of $z_{0}$ if $|G(0)| \leq 1$.

Now we consider $G(0)$ as a function
\begin{equation} \label{eq101}
f(z)  = \frac{(-1)^{k-1} z^{-k}(z + z^{-1})+ (z + 1 + z^{-1} ) z^{-2k}}{(z + 1 + z^{-1}) (-1)^{k-1} z^{-k} (z + z^{-1}) + (z + 1 + z^{-1} ) z^{-2k}} .
\end{equation}
calculates in the complex number $z_{0}$.  We will show that there exists a complex number $z_{1}$ in the $\varepsilon$-neighborhood of $z_{0}$ such that $|f(z_{1})| = 1$.

It is easy to see that the equality $| f(z) |=1$ is equivalent to the equality
$$
\Big| 1 + (-1)^{k-1} \frac{z + z^{-1}}{z + 1 + z^{-1}} z^{k}  \Big| = \Big| 1 + (-1)^{k} (z+z^{-1}) z^{k} \Big| .
$$
Since we are looking roots in a neighborhood of $z_{0}$, denote $\zeta = z - z_{0}$ and consider function
\begin{equation} \label{eq102}
\tilde f(\zeta)  =  \frac{(-1)^{k-1} z^{-k}(z + z^{-1})+ (z + 1 + z^{-1} ) z^{-2k}}{(z + 1 + z^{-1}) (-1)^{k-1} z^{-k} (z + z^{-1}) + (z + 1 + z^{-1} ) z^{-2k}},
\end{equation}
where $z = z_{0} + \zeta$. The equality $| \tilde f(\zeta) | = 1$ is equivalent to the equality
\begin{eqnarray} \label{e103}
\begin{gathered}
\Big| 1 + (-1)^{k-1} \frac{(z_{0} + \zeta) + (z_{0} + \zeta)^{-1}}{(z_{0} + \zeta) + 1 + (z_{0} + \zeta)^{-1}} (z_{0} + \zeta)^{k}  \Big| \qquad \qquad  \\  \qquad \qquad  = \Big| 1 + (-1)^{k} ((z_{0} + \zeta)+(z_{0} + \zeta)^{-1}) (z_{0} + \zeta)^{k} \Big| .
\end{gathered}
\end{eqnarray}
Denote  $g(\zeta) = (z_{0} + \zeta)$. Since $|z_{0}| \leq1$, we have $|g(0)| \leq 1$. Therefore we can apply Lemma~\ref{lso} to Eq.(\ref {e103}). It gives us that for any $\varepsilon>0$ there exists $k_{1}$ such that for any integers $k \geq k_{1}$ there exists a root $\zeta_{0}$ of the Eq.(\ref{e103}) such that $|\zeta_{0}| < \varepsilon$.  Thus, we obtained that there exists $z_{1} = z_{0} + \zeta_{0}$ such that $|z_{1} - z_{0}| < \varepsilon$ and $|f(z_{1})| = 1$ in Eq.(\ref{e100}).  Hence, if we replace $z_{0}$ by $z_{1}$ in Eq.(\ref{e34}) and Eq.(\ref{e100}), we will get that there exists a number $A$ which is a limit of zeros of Yamada  polynomials  $\{ R[C_{n} (\Theta_{s}(\infty_{+}^k))] \}_{n=1,s=1,k=1}^{\infty, \infty, \infty}$ such that $|z_{1} - A| < \varepsilon$ and $|z_{0} - A| < 2 \varepsilon$. Thus, we proved that roots of Yamada polynomial are dense in the disc $\{ z \in \mathbb C :  |z| \leq 1\}$.

The relation between the Yamada polynomials of a spatial graph $g$ and its mirror image $\widehat{g}$ is presented in Proposition~6 in \cite{YA}:
$$
R[\widehat{g}](A) = R[g](A^{-1}).
$$

Since diagram $C_{n}(\Theta_{s}(\infty_-^k))$ is the mirror image of diagram $C_{n}(\Theta_{s}(\infty_+^k))$, from the previous discussions we see that zeros of Yamada polynomials of $R[C_{n}(\Theta_{s}(\infty_-^k))]_{n=1,s=1,k=1}^{\infty, \infty, \infty}$ are dense in the region $\{z\in \mathbb{C} \, : \,  |z|\geq 1\}$.

Summarising, we obtain that the zeros of Yamada polynomial are dense in the complex plane.
\end{proof}


\bibliographystyle{amsplain}

\end{document}